\newcommand{\field}[1]{\mathbb{#1}}
\newcommand{\CC}{\field{C}}
\newcommand{\NN}{\field{N}}
\newcommand{\TT}{\field{T}}
\newcommand{\ZZ}{\field{Z}}
\newcommand{\RR}{\field{R}}
\newcommand{\Bb}{\mathcal{B}}
\newcommand{\Cc}{\mathcal{C}}
\newcommand{\Hh}{\mathcal{H}}
\newcommand{\Kk}{\mathcal{K}}
\newcommand{\Ll}{\mathcal{L}}
\newcommand{\Mm}{\mathcal{M}}
\newcommand{\Oo}{\mathcal{O}}
\newcommand{\ol}{\overline{\lambda}}
\newcommand{\la}{\langle}
\newcommand{\ra}{\rangle}
\newcommand{\ds}{\displaystyle}
\newcommand{\tm}{\tilde{\mu}}
\newcommand{\Ind}{\operatorname{Ind}}
\newcommand{\go}{G^{(0)}}
\newcommand{\fS}{\frak{F}}
\newtheorem{thm}{Theorem}[section]
\newtheorem{cor}[thm]{Corollary}
\newtheorem{lem}[thm]{Lemma}
\newtheorem{prop}[thm]{Proposition}
\theoremstyle{definition}
\newtheorem{dfn}[thm]{Definition}
\theoremstyle{remark}
\newtheorem{rmk}[thm]{Remark}
\newtheorem{example}[thm]{Example}
\newtheorem*{examples*}{Examples}
\numberwithin{equation}{subsection}
\numberwithin{equation}{subsection}
\title{ Groupoid actions and  Koopman representations}
\author{Valentin Deaconu}
\address{Valentin Deaconu \\ Department of Mathematics (0084)\\ University
of Nevada\\ Reno NV 89557-0084\\ USA} \email{vdeaconu@unr.edu}
\author{Marius Ionescu}
\address{ Marius Ionescu\\ United States Naval Academy\\ Annapolis\\ MD 21402-5002\\ USA}
\email{ionescu@usna.edu}
\thanks{This work was partially supported by a AMS-Simons Research Enhancement Grant for PUI Faculty}
\keywords{Groupoid action;  quasi-invariant measure; induced representation; Koopman representation; groupoid $C^*$-algebra.}
\subjclass{Primary 46L05.}
\begin{document}
\begin{abstract}
We study the $C^*$-algebra $C^*(\kappa)$ generated by the Koopman representation $\kappa=\kappa^\mu$ of a locally compact groupoid $G$ acting on a measure space $(X,\mu)$, where $\mu$ is  quasi-invariant for the action. We interpret $\kappa$ as an induced representation and we prove that if  the groupoid $G\ltimes X$ is amenable, then $\kappa$ is weakly contained in the regular representation $\rho=\rho^\mu$ associated to $\mu$, so we have a surjective homomorphism $C^*_r(G)\to C^*(\kappa)$. We consider the particular case of Renault-Deaconu groupoids $G= G(X,T)$ acting on their unit space $X$ and show that in some cases $C^*(\kappa)\cong C^*(G)$.
 
\end{abstract}
\maketitle
\section{introduction}

\bigskip

The concept of a group action on a space was generalized to a groupoid
action and it has applications to dynamical systems, representation
theory and operator algebras. If groups can roughly be described  as
the set of symmetries of certain objects, then groupoids can be
thought as the set of symmetries of fibered objects. 

A  unitary  representation of a locally compact groupoid $G$
endowed with a Haar system    is a triple
$(\mu,\go*\Hh,\hat{L})$ consisting of a quasi-invariant measure
$\mu$ on the unit space $\go$ of $G$, a Borel Hilbert bundle $\go*\Hh$ over
$\go$, and a Borel homomorphism $\hat{L}:G\to
\text{Iso}(\go*\Hh)$ such that $\hat{L}(g)=(r(g),L_g,s(g))$ and
$L_g:\Hh(s(g))\to \Hh(r(g))$ is a Hilbert space isomorphism (cf. (\cite[Definition II.1.6]{R80}; see also \cite[Definition
7.7]{W},\cite{R87,R91})). A Koopman representation of $G$ is a unitary
representation of $G$ determined by a pair $(X,\mu)$ consisting of a separable
locally compact space $X$ on which $G$ acts leaving the measure
$\mu$ quasi-invariant. That is, $X$ is fibered over $\go$ by a continuous open
surjection $\omega:X\to\go$, and $\mu$ admits a disintegration
$d\mu(\cdot)=\int_{X_u}d\mu_u(\cdot)d\tm(u)$ where each $\mu_u$ is a
probability measure supported on $X_u:=\omega^{-1}(u)$ and $\tm=\omega_*(\mu)$ is a
probability measure on $\go$ that is quasi-invariant in the usual sense. The
Hilbert bundle $\Hh$ for the Koopman representation determined by $(X,\mu)$ is
$\{L^2(X_u,\mu_u)\}_{u\in\go}$ and the representation $L$, denoted here by
$\kappa^\mu$, is given by 
\[
  \kappa^\mu_g:L^2(X_{s(g)},\mu_{s(g)})\to L^2(X_{r(g)},\mu_{r(g)}),
\]

\[
  \kappa_g^\mu\xi(x):=D(g^{-1},x)^{\frac12}\xi(g^{-1}x),
\]
 where $D(g,\cdot)$ is the Radon-Nikodym derivative
$d(g\mu_{s(g)}/d\mu_{r(g)})$ (see Section \ref{sec:induc-repr-from} for
  details). It is our main goal to study the $C^*$-algebra $C^*(\kappa^\mu)$ generated by the Koopman representation.

We begin  by fixing some notation associated with a locally compact
 Hausdorff groupoid $G$ with a Haar sytem. We recall the definition of
a groupoid action $G\curvearrowright X$ on a locally compact space $X$
fibered over the unit space of $G$ and of additional concepts like
orbits, stabilizers and transitive actions. We illustrate with several
examples of actions, including the cases $X=G^{(0)}, X=G$, $X=G/H$ for
$H$ a closed subgroupoid and $\ds X=\bigcup_{x\in G^{(0)}}G(x,S)$,
where $G(x,S)$ is the Cayley graph for a generating set $S$. We also
review the definition of the action groupoid $G\ltimes X$ and of the
concepts of groupoid fibration and groupoid covering.  

We continue with quasi-invariant measures on $X$ for
$G\curvearrowright X$ and relate them to measures for the action
groupoid $G\ltimes X$.  We recall some facts about groupoid
representations, induced representations  and amenability.  The
Koopman representation $\kappa^\mu: G\to \Bb(L^2(X,\mu))$ associated
to a quasi-invariant probability measure $\mu$ on $X$ can be
understood as the induced representation of the trivial representation
$i_{G\ltimes X}$. Induced representations in the case $G$ is a Borel
transformation group groupoid already appeared in Definition 3.5 of
\cite{Ra76}. When $G$ acts on itself by left multiplication, the
Koopman 
representation is just the left regular representation.  

We prefer to work with unitary representations of groupoids which
appear in a natural way in our context rather than with the integrated
forms at the level of $C^*$-algebras. Most of our results could
be recast in terms of Hilbert modules \`a la Rieffel.  Holkar has
already shown in \cite{H} that Rieffel's construction of induced
representations is valid for topological groupoid correspondences. We
believe that Renault's perspective from \cite{R14} of inducing unitary
representation at the groupoid level is better suited for examples and
to illustrate how one can recover the classical definitions and
results going back to Mackey's work on induced representations of groups.

We define $C^*(\kappa^\mu)$ to be the closure of $\kappa^\mu(C_c(G))$
in $ \Bb(L^2(X,\mu))$ and we try to relate it to $C^*_r(G)$. We prove
that if the action groupoid $G\ltimes X$ is $\sigma$-compact and
amenable and the measure $\mu$ has full support, then the Koopman
representation is weakly contained in the left regular representation
associated to $\mu$, so we have a surjective homomorphism $C^*_r(G)\to
C^*(\kappa^\mu)$. In some cases (see the examples involving graph $C^*$-algebras in section 6), this is an isomorphism.

In the case when the Renault-Deaconu groupoid $G(X,T)$ associated to a
local homeomorphism $T:X\to X$ acts on a space  $Y$, it is known that
the action groupoid is isomorphic to another Renault-Deaconu groupoid,
see \cite{IK}.  The form of quasi-invariant measures for $G(X,T)$ with
given Radon-Nikodym derivative is studied in several papers, like
\cite{KR,IK, R03}. We illustrate the theory with several examples in the last section of the paper.

\subsection*{Acknowledgments}The authors would like to thank Marcelo Laca whose suggestions led to
an improvement of our results, recovering the ideal structure of a graph $C^*$-algebra from particular Koopman representations.

\bigskip

\section{Groupoid actions }

\bigskip

A groupoid $G$ is a  small category with inverses. We will use $s$ and
$r$ for the source and range maps $s,r:G \to G^{(0)}$, where $G^{(0)}$
is the unit space. We always assume that $G$ has  a locally compact
Hausdorff topology compatible with the algebraic structure. To
construct $C^*$-algebras from a groupoid $G$, we will assume that $G$
is  second countable with a Haar system. An \'etale groupoid is a
topological groupoid where the range map $r$ (and necessarily the 
source map  $s$) is a local homeomorphism. The unit space $G^{(0)}$ of an \'etale groupoid is always an open subset of $G$ and a Haar system is given by the counting measures. 

The set of composable pairs is denoted by $G^{(2)}$. Let $G_u$ be the set of $g\in G$ with $s(g)=u$, let $G^v$ be the set of $g\in G$ with $r(g)=v$, and let $G_u^v=G_u\cap G^v$. Two units $x,y \in G^{(0)}$ belong to the same $G$-orbit if there exists $g \in G$ such that $s(g) = x$ and $r(g) = y$. When every $G$-orbit is dense in $G^{(0)}$,  the groupoid $G$ is called minimal. 

The isotropy group of a unit $x\in G^{(0)}$ is the group \[G_x^x :=\{g\in G\; | \; s(g)=r(g)=x\},\] and the isotropy bundle is
\[G' :=\{g\in G\; | \; s(g)=r(g)\}= \bigcup_{x\in G^{(0)}} G_x^x.\]
A groupoid $G$ is said to be principal if all isotropy groups are trivial, or equivalently, $G' = G^{(0)}$. 

\begin{dfn} Let $G$ be a topological groupoid. A bisection is a subset $U\subseteq  G$ such that $s$ and $r$ are both injective when restricted to $U$.
\end{dfn}
An open bisection $U$ determines a homeomorphism $\pi_U=(r|_U)\circ(s|_U)^{-1}:s(U)\to r(U), \pi_U(x)=r(s^{-1}(x))$. An \'etale groupoid has sufficiently many open bisections which generate its topology.

\begin{example}\label{RD}

Let $X$ be a locally compact Hausdorff space and let $T:X\to X$ be a local homeomorphism. The Renault-Deaconu groupoid associated to $T$ is
\[G(X,T)=\{(x,m-n,y)\in X\times \mathbb Z\times X:T^m(x)=T^n(y)\}\]
with operations
\[(x,k,y)(y,\ell, z)=(x, k+\ell, z),\; (x,k,y)^{-1}=(y,-k,x).\]
We identify the unit space of $G(X,T)$ with $X$ via the map $(x,0,x)\mapsto x$. The range and source maps are then
\[r(x,k,y)=x,\; s(x,k,y)=y.\]
A basis for the topology consists of sets of the form
\[Z(U,m,n,V)=\{(x,m-n,y): T^m(x)=T^n(y), x\in U, y\in V\},\]
where $U,V$ are open subsets of $X$ such that $T^m|_U$ and $T^n|_V$ are one-to-one and $T^m(U)=T^n(V)$. These are bisections for $G(X,T)$, and with this topology, $G(X,T)$ becomes an \'etale  groupoid.
\end{example}

\bigskip

 We now recall the definition of a groupoid action on a space given in
 \cite[Definition 2.1]{W} or \cite[Definition 4.1]{AD23}:

\begin{dfn}\label{sa}  A topological groupoid $G$ is said to act
(on the left) on a locally compact space $X$, if there are given 
a continuous  surjection $\omega : X \rightarrow G^{(0)}$,  called the anchor or moment map,
and a continuous map
\[G\ast X \rightarrow X, \quad\text{write}\quad (g , x)\mapsto
g \cdot x=gx,\]
where
\[G \ast X = \{(g , x)\in G \times X \mid s(g) = \omega (x)\},\]
that satisfy

\medskip

i) $\omega (g \cdot x) =r(g)$ for all $(g , x) \in G \ast X,$

\medskip

ii) $(g _2, x) \in G \ast X,\,\, (g_1, g_2)
\in G ^{(2)}$ implies $(g _1g _2, x),
(g _1, g _2\cdot x) \in G * X$ and
\[g _1\cdot(g _2\cdot x) = (g _1g _2)
\cdot x,\]

\medskip

iii) $\omega (x)\cdot x = x$ for all $x\in X$.
 
\noindent We denote by $X_u$  the fiber $\omega^{-1}(u)$ over $u\in G^{(0)}$.
\end{dfn}
We should mention that in \cite[Section 2]{MRW1} the authors required that the
anchor map is open as well. 

The action of $G$ on $X$  is called transitive if given $x,y\in X$, there is $g\in G$ with $g\cdot x=y$ and is free if $g\cdot x=x$ for some $x$ implies $g=\omega(x)\in G^{(0)}$.

The set of fixed points in $X$ is defined as \[X^G=\{x\in X: g\cdot x=x\;\text{for all}\; g\in G_{\omega(x)}^{\omega(x)}\}.\] 
If $G$ has trivial isotropy, then $X^G=X$.

The orbit of $x\in X$ is \[Gx=\{g\cdot x: g\in G, \;
  s(g)=\omega(x)\}.\] The set of orbits is denoted by $G\backslash X$
and has the quotient topology. The action of $G$ on $X$ is called
minimal if every orbit $Gx$ is dense in $X$. For a transitive action there is a single orbit.

For $x\in X$, its  stabilizer group is
\[G(x)=\{g\in G: g\cdot x=x\},\] which is a subgroup of  $G_u^u$ for
$u=\omega(x)$. 
\begin{rmk}
Note that if the action of $G$ on $X$ is transitive, then $G(x)\cong G(y)$ for $x,y\in X$. Indeed, if $h\cdot x=y$, then $g\mapsto hgh^{-1}$ is an isomorphism $G(x)\to G(y)$.
For a transitive action, we may consider
$\ds H=\bigcup_{u\in G^{(0)}}G(x)$ (here we pick an $x$ in each fiber $X_u=\omega^{-1}(u)$), which is a subgroupoid of the isotropy $\ds G'=\bigcup_{u\in G^{(0)}}G_u^u$. Then $H$ is a normal subgroupoid of $G$, in the sense that $ghg^{-1}\in H$ for $h\in H$ and $g\in G$ with $s(g)=r(h)$. Indeed, for $h\in G(x)$, we have $ghg^{-1}\in G(g\cdot x)$. The quotient groupoid $G/H$ made of left cosets  can be identified with $X$ using $gG(x)\mapsto g\cdot x$.
\end{rmk}

\begin{example}\label{ex:actionunit} A groupoid $G$ with open source and range maps acts on
  its unit space $G^{(0)}$ by $g\cdot s(g)=r(g)$. In this case,
  $\omega=id$. The groupoid is called transitive if this action is
  transitive. Notice that $g\cdot u=u$ for all $g\in G_u^u$, in
  particular $G(u)=G_u^u$ and $(G^{(0)})^G=G^{(0)}$.  A transitive groupoid with
  discrete unit space is of the form $G^{(0)}\times K\times G^{(0)}$ with usual operations,
  where $K$ is a copy of the isotropy group.  
\end{example}

\begin{example}\label{homogeneous} A groupoid $G$ acts on itself by left multiplication with $\omega(g)=r(g)$.  More general, if $G$ is a groupoid and $H$ is a closed  subgroupoid, then $G$ acts on the set of left cosets $G/H$ by left multiplication. Here $\omega(gH)=r(g)$. Note that this action is not necessarily transitive, since given $g_1H, g_2H\in G/H$, the element $g_2g_1^{-1}$ is defined only for $s(g_1)=s(g_2)$. 
\end{example}


\begin{rmk}If $G$ acts on $X$, the fibered product \[G \ast X = \{(g , x)\in G \times X \mid s(g) = \omega (x)\}\] has a natural
structure of  groupoid, called the semi-direct product or
action groupoid and is denoted  by $G \ltimes X$, where
\[(G \ltimes X)^{(2)} = \{ ((g_1, x_1),(g _2, 
x_2)) \mid \,\,x_1 = g _2\cdot x_2\},\]
with operations
\[(g _1, g_2\cdot x_2)(g _2,  x_2) = 
(g _1g _2, x_2), \;\;
(g,x)^{-1} = (g ^{-1}, g \cdot x).\]
The source and range maps of $G \ltimes X$ are
 \[s(g ,x) = (s(g),x)=
(\omega( x), x),\quad r(g ,x) =  (r(g), g\cdot x)=(\omega(g\cdot  x),g\cdot x),\]
and the unit space $(G\ltimes X)^{(0)}$ may be identified with $X$ via the map \[i:X\to G\ltimes X,\;\; i(x)= (\omega(x),x).\] 
Note that the source and range maps defined above are open  even if
the anchor map $\omega$ is not assumed to be open (see \cite[page 10]{AD23}).

Recall from \cite{BM,  DKR} that a {\em groupoid fibration} is a surjective open morphism
of locally compact groupoids $\pi :G\rightarrow H$ with the property
that for all $h\in H$ and $x\in G^{(0)}$ with $\pi(x)=s(h)$ there is $g\in
G$ with $s(g)=x$ and $\pi(g)=h$. 
If $g$ is unique for any such $h$ and $x$, then $\pi$ is called 
 a {\em groupoid covering}. Note that for a groupoid covering we have
 $\pi^{-1}(H^{(0)})=G^{(0)}$.

For $G$ acting on $X$, the projection map\[\pi:G\ltimes X\to G, \;\; \pi(g,x)=g\] is a covering of groupoids. Conversely, given a covering of groupoids $\pi:G\to H$, there is an action of $H$ on $X=G^{(0)}$ with $\omega=\pi|_{G^{(0)}}:G^{(0)}\to H^{(0)}$ and $G\cong H\ltimes X$. The action is defined by $h\cdot x=r(g)$, where $g\in G$ is unique with $\pi(g)=h$ and the isomorphism is given by $g\mapsto (\pi(g), s(g))$.
\end{rmk}
Note that for $G$ acting on $X=G^{(0)}$ by $g\cdot s(g)=r(g)$, we get $G \ltimes X\cong G$.

\begin{example}\label{skew}
Consider $E=(E^0,E^1,r,s)$ a topological graph and $G$ a topological groupoid. Recall that $E^0,E^1$ are locally compact Hausdorff spaces, $r:E^1\to E^0$ is continuous and $s:E^1\to E^0$ is a local homeomorphism. Let $c:E^0\cup E^1\to G$ be a continuous function such that $c(E^0)\subset G^{(0)}$, $c(s(e))=s(c(e)), \;\; c(r(e))=r(c(e))$ and such that $(c(e_1),c(e_2))\in G^{(2)}$ for $e_1e_2\in E^2$. The map $c$ is called a cocycle and it can be extended to finite paths by $c(e_1e_2\cdots e_k)=c(e_1)c(e_2)\cdots c(e_k)$.

The skew-product graph $E\times_cG$ has vertices \[E^0\times_c G=\{(v,g): c(v)=s(g)),\] edges
\[E^1\times_c G=\{(e,g): (g,c(e))\in G^{(2)}\}\] and incidence maps \[\tilde{r}(e,g)=(r(e), gc(e)),\;\; \tilde{s}(e,g)=(s(e), g).\] Then $(E^0\times_cG, E^1\times_cG, \tilde{r}, \tilde{s})$ becomes a topological graph since $\tilde{s}$ is a local homeomorphism and $\tilde{r}$ is continuous. Moreover, $G$ acts freely on $E^0\times_cG$ by $h\cdot(v,g)=(v,hg)$, where $\omega:E^0\times_cG\to G^{(0)}, \omega(v,g)=r(g)$. Similarly, $G$ acts freely on $E^1\times_cG$ by $h\cdot(e,g)=(e,hg)$ with $\omega:E^1\times_cG\to G^{(0)}, \omega(e,g)=r(g)$. The action commutes with the incidence maps and the quotient graph is isomorphic to $E$.
\end{example}

\begin{example}\label{Cayley}
Let $G$ be a topological groupoid. We say that $Y\subset G^{(0)}$ is a topological transversal if $Y$ contains an open transversal (recall that a transversal intersects every orbit). A compact generating pair $(S,Y)$ of $G$ is made of a compact subset $S\subset G$ and a compact topological transversal $Y$ such that for every $g\in G|_Y=\{g\in G: s(g),r(g)\in Y\}$ there exists $n$ such that $\ds \bigcup_{0\le k\le n}(S\cup S^{-1})^k$ is a neighborhood of $g$ in $G|_Y$. Here, for a subset $A\subset G$, $A^k$ is the set of all products $a_1\cdot a_2\cdots a_k$ where $a_i\in A$.

If $(S,Y)$ is a compact generating pair for $G$ and $x\in Y$, the Cayley graph $G(x,S)$ is the directed graph with vertex set $G_Y^x=\{g\in G: s(g)\in Y, r(g)=x\}$ such that there is an edge from $g_1$ to $g_2$ whenever there is $h\in S$ with $g_2=g_1h$. If $G^{(0)}$ is compact, then the groupoid $G$ with generating set $(S,G^{(0)})$ acts freely on the union of Cayley graphs $\ds\bigcup_{x\in G^{(0)}}G(x,S)$ by left multiplication.

In particular, if $\sigma:\TT\to \TT, \sigma(z)=z^d$ for $d\ge 2$ and $G=G(\TT,\sigma)$ is  the groupoid of germs of the pseudogroup generated by $\sigma$ (see section 2 in \cite{R00}), then we can take the generating set $S$ to be a finite set of germs of  maps $\sigma^{-1}:\sigma(U)\to U$, where $U\subseteq \TT$ is an open set such that $\sigma:U\to \sigma(U)$ is a homeomorphism. Then the Cayley graphs $G(z,S)$ are regular trees of degree $d+1$ and the groupoid $G$ acts on their union. For more on Cayley graphs of groupoids, see \cite{N}.

\end{example}
\bigskip

\section{Quasi-invariant measures and representations}

\bigskip

Let $G$ be a locally compact groupoid with left Haar system
$\{\lambda^u\}_{u\in G^{(0)}}$ and let $\mu$ be a measure on
$G^{(0)}$. The measure $\nu=\mu\circ\lambda$ on $G$ induced by $\mu$ is defined
via
\[
  \int_G f(g)\,d\nu(g)=\int_{G^{(0)}}\int_{G^u}f(g)\,d\lambda^u(g)d\mu(u)
\]
for all $f\in C_c(G)$. Let $\nu^{-1}$ be the push-forward of $\nu$ under
the inverse map. 

\begin{dfn}
  A measure $\mu$ on $G^{(0)}$ is called quasi-invariant
  (\cite[Definition I.3.2]{R80}) if its induced measure $\nu$ is
  equivalent to its inverse $\nu^{-1}$,  i.e. they have the same nullsets (we write $\nu\sim \nu^{-1}$ in this case).
  
  \end{dfn}
  
 \begin{rmk}
For an \'etale groupoid $G$, a Radon measure $\mu$ on $G^{(0)}$ is quasi-invariant if for all open bisections $U$, the measures $({\pi_U}_*\mu) |_{s(U)}$ and $\mu |_{r(U)}$ are equivalent. Here $\pi_U:s(U)\to r(U),\; \pi_U(x)=r(s^{-1}(x))$ and $({\pi_U}_*\mu)(B)=\mu(\pi_U^{-1}(B))$ for $B\subset G^{(0)}$ a Borel set.

\end{rmk}

\begin{rmk}
  Recall from \cite[Proposition I.3.3]{R80} that if $\Delta_\mu:G\to(0,\infty)$
  is a Radon-Nikodym derivative such that
  \[
   \int_{G^{(0)}}\int_{G^u}f(g)\,d\lambda^u(g)d\mu(u)=\int_{G^{(0)}}\int_{G_u}f(g)\Delta_\mu(g)\,d\lambda_u(g)d\mu(u), 
  \]
  where $\lambda_u$ is the push forward of $\lambda^u$ under the
  inversion map for all $u\in
  G^{(0)}$, then $\Delta_\mu$ is
  a cocycle a.e. Moreover, \cite[Theorem 3.2]{Ra82} implies that one
  can choose $\Delta_\mu$ to be a strict cocycle: $\Delta_\mu(gh)=\Delta_\mu(g)\Delta_\mu(h)$ for all
  $(g,h)\in G^{(2)}$ and $\Delta_\mu(g^{-1})=\Delta_\mu(g)^{-1}$ for all $g\in G$.
\end{rmk}

\bigskip

\bigskip

\begin{example}
  Let $\sigma:\TT\to\TT$ be $\sigma(z)=z^d$ for $d\ge 2$ an
  integer. Let $G(\TT,\sigma)$ be the associated  Renault-Deaconu groupoid, isomorphic to the groupoid of germs of the pseudogroup generated by $\sigma$. Then
  the Haar measure $\mu$ on $\TT$ is quasi-invariant and
  $\Delta_\mu(z,k-l,w)=(1/d)^{k-l}$ for all $(z,k-l,w)\in G(\TT,\sigma)$.
\end{example}

We assume now that the topological groupoid $G$ acts on the space $X$ via $\omega:X\to G^{(0)}$. 
Recall that if $\mu '$ is a finite nontrivial measure on $X$ then there is a
probability measure $\mu$ on $X$ such that $\mu '\sim \mu$, i.e. they have the same nullsets. 
\begin{dfn}
 Suppose that $\mu$ is a Radon probability  measure on $X$ and let
  $\tm:=\omega_*(\mu)$ on $G^{(0)}$. That is, $\tm(B)=\mu(\omega^{-1}(B))$ for all
  Borel sets $B\subset G^{(0)}$. 
  
  A decomposition of $\mu$ relative to
  $\omega$ is a family of measures $\{\mu_u\}_{u\in G^{(0)}}$ such that
  \begin{enumerate}
  \item $\operatorname{supp}\mu_u=X_u=\omega^{-1}(u)$ for all $u\in
    G^{(0)}$ and
  \item for all $f\in C_c(X)$, the map $\ds u\mapsto
    \int_{X_u}f(x)\,d\mu_u(x)$ belongs to $C_c(G^{(0)})$ and
    \[
      \int_X f(x)\,d\mu(x)=\int_{G^{(0)}}\int_{X_u}f(x)\,d\mu_u(x)\,d\tm(u).
    \]
  \end{enumerate}
\end{dfn}

\begin{dfn}\label{def:qi}
Let $\mu$ be a  Radon probability measure on $X$. We say that $\mu$ is
$G$-quasi-invariant for the action of $G$ on $X$ if it admits a decomposition $\{\mu_u\}$ relative
to $\omega$ such that both of the following conditions hold:
\begin{enumerate}
\item For all $g\in G$, the measure $g\mu_{s(g)}$ is equivalent with
  $\mu_{r(g)}$, where $g\mu_{s(g)}(B):=\mu_{s(g)}(g^{-1}B)$ for any
  Borel set $B\subseteq X_{r(g)}$; and
\item the measure $\tm=\omega_*(\mu)$ on $G^{(0)}$ is quasi-invariant for the
  groupoid $G$. 
\end{enumerate}

\end{dfn}

Both of the two conditions in the definition are needed as the
following examples show.
\begin{example}
  \begin{enumerate}
  \item Assume that $G$ is a locally compact \emph{group} acting on a
    locally compact Hausdorff space. Therefore $G^{(0)}=\{e\}$ and
    $\omega(x)=e$ for all $x\in X$. Let $\mu$ be a probability measure
    on $X$. Then $\tm=\delta_e$, the point mass at $e$, and $\mu_e=\mu$. Hence $\mu$ is
    $G$-quasi-invariant in the sense of Definition \ref{def:qi} if and only
    if it is $G$-quasi-invariant in the classical sense: the measure $g\mu$
    is equivalent to $\mu$ for all $g\in G$, where
    $g\mu(B)=\mu(g^{-1}B)$.
  \item Assume that $G$ is a locally compact Hausdorff groupoid that
    acts on its unit space $X=G^{(0)}$ as in Example
    \ref{ex:actionunit}. Thus $\omega(x)=x$ for all $x\in
    G^{(0)}$, $\tm=\mu$, $X_x=\{x\}$, and, hence,
    $\mu_x=\delta_x$. Therefore a measure $\mu$ on $G^{(0)}$ is
    $G$-quasi-invariant in the sense of Definition \ref{def:qi} if and
    only if $\mu$ is a quasi-invariant measure for $G$ in the usual sense.
  \end{enumerate}
\end{example}

Renault defined in \cite[Definition 2.2]{R14} a $G$-quasi-invariant
measure to be a  quasi-invariant measure for the action groupoid. The
following theorem proves that the two definitions are equivalent. For
the case of Borel groupoids, this result is Corollary 5.3.11 in \cite
{AR} and a similar result appears in \cite[Proposition 3.1]{R14}. 

\begin{thm}\label{thm:quasi-invariant} If the groupoid $G$ acts on $X$, then a
  measure $\mu$ on $X$ is $G$-quasi-invariant iff $\mu$ is
  quasi-invariant for the action groupoid $G\ltimes X$ with unit space
  $X$. 
\end{thm}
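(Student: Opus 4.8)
The plan is to fix a decomposition $\{\mu_u\}_{u\in\go}$ of $\mu$ relative to $\omega$, obtained from the disintegration theorem so that $\int_X f\,d\mu=\int_{\go}\int_{X_u}f\,d\mu_u\,d\tm(u)$ with $\tm=\omega_*\mu$, and then to compute the measure $\nu$ induced on $G\ltimes X$ by $\mu$ and its inverse $\nu^{-1}$ explicitly. The Haar system $\{\beta^x\}_{x\in X}$ of $G\ltimes X$ comes from $\{\lambda^u\}$ via the identification $(G\ltimes X)^x\cong G^{\omega(x)}$, $g\mapsto(g,g^{-1}x)$, so that $\int F\,d\beta^x=\int_{G^{\omega(x)}}F(g,g^{-1}x)\,d\lambda^{\omega(x)}(g)$. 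Integrating against $\mu$ and inserting the decomposition gives
\[
\int F\,d\nu=\int_{\go}\int_{X_v}\int_{G^v}F(g,g^{-1}x)\,d\lambda^v(g)\,d\mu_v(x)\,d\tm(v),
\]
while applying the inversion $(g,y)^{-1}=(g^{-1},g\cdot y)$ and substituting $h=g^{-1}$ yields
\[
\int F\,d\nu^{-1}=\int_{\go}\int_{X_v}\int_{G_v}F(h,x)\,d\lambda_v(h)\,d\mu_v(x)\,d\tm(v).
\]

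Next I would disintegrate both measures over $G$ along the covering $\pi:G\ltimes X\to G$, $\pi(g,x)=g$, whose fibre over $g$ is $\{g\}\times X_{s(g)}$. A Fubini rearrangement of the two displays shows that $\nu$ disintegrates over the base measure $\tilde\nu:=\tm\circ\lambda$ on $G$ with conditional measure $g^{-1}\mu_{r(g)}$ on $X_{s(g)}$, whereas $\nu^{-1}$ disintegrates over $\tilde\nu^{-1}$ with conditional measure $\mu_{s(g)}$. In particular, since every conditional measure is a probability measure, $\pi_*\nu=\tilde\nu$ and $\pi_*\nu^{-1}=\tilde\nu^{-1}$; as $\pi$ is a groupoid morphism commuting with inversion, this is consistent with $\tilde\nu^{-1}$ being the inverse of $\tilde\nu$.

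The conclusion then follows from the standard principle that two measures fibred over a common base space are equivalent precisely when their base measures are equivalent and their conditional measures are equivalent for almost every base point. Applied here, $\nu\sim\nu^{-1}$ holds iff $\tilde\nu\sim\tilde\nu^{-1}$ and $g^{-1}\mu_{r(g)}\sim\mu_{s(g)}$ for $\tilde\nu$-a.e.\ $g$. The first condition is exactly quasi-invariance of $\tm$ for $G$, namely condition (2) of Definition \ref{def:qi}; transporting the second along the bijection $y\mapsto gy:X_{s(g)}\to X_{r(g)}$ turns it into $\mu_{r(g)}\sim g\mu_{s(g)}$, which is condition (1). Keeping track of Radon--Nikodym derivatives throughout identifies the modular cocycle of $\nu$ as $\Delta_\mu(g,y)=\Delta_{\tm}(g)\,D(g,gy)^{-1}$, where $D(g,\cdot)=d(g\mu_{s(g)})/d\mu_{r(g)}$; this also yields directly the implication from $G$-quasi-invariance to quasi-invariance of $G\ltimes X$, since $\Delta_\mu$ is then a strictly positive density for $d\nu/d\nu^{-1}$.

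The main obstacle I anticipate is reconciling the pointwise phrasing of condition (1) (``for all $g\in G$'') with the almost-everywhere statement produced by the disintegration argument. I would resolve this by invoking the essential uniqueness of the disintegration to choose a version of $\{\mu_u\}$ for which the fibre equivalences hold on all of $G$, together with the fact, recalled before the statement, that $\Delta_{\tm}$ --- and hence $\Delta_\mu$ --- may be taken to be an everywhere-defined strict cocycle via Ramsay's theorem. One must also verify that the disintegration can be arranged so that $\operatorname{supp}\mu_u=X_u$, which uses that $\omega$ is a continuous open surjection and that $\mu$ has full support. The auxiliary facts --- existence and essential uniqueness of the disintegration over $\tm$ and the fibred-equivalence principle --- are routine in the second countable Radon setting assumed here and correspond, at the level of Borel groupoids, to \cite[Corollary 5.3.11]{AR}.
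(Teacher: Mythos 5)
Your argument is correct in outline and takes a genuinely different route from the paper's in the harder (``only if'') direction. The paper treats the two implications separately: the forward direction is essentially the Fubini computation you describe, exhibiting $\Delta_\mu(g,x)=D(g,x)\Delta_{\tm}(g)$ as a density of $\overline{\nu}$ against $\overline{\nu}^{-1}$; for the converse it first shows $\tm\circ\lambda\sim(\tm\circ\lambda)^{-1}$ by averaging $\Delta_\mu$ over the fibres, $\Delta_{\tm}(g)=\int_{X_{s(g)}}\Delta_\mu(g,x)\,d\mu_{s(g)}(x)$, and then, for each \emph{fixed} $g$, applies \cite[Proposition I.3.20]{R80} to the measurable bisection $U_g=\{(g,x):x\in X_{s(g)}\}$ to get $g\mu_{s(g)}\sim\mu_{r(g)}$. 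Your single disintegration of $\nu$ and $\nu^{-1}$ over $G$ along the covering $\pi$, together with the fibred-equivalence principle, packages both implications into one statement; the identification of the conditional measures as $g^{-1}\mu_{r(g)}$ versus $\mu_{s(g)}$ and of the base measures as $\tm\circ\lambda$ versus its inverse is correct, and the principle itself is valid in the standard Borel setting by essential uniqueness of disintegration. What the paper's bisection step buys, and what your route does not fully deliver, is the ``for all $g$'' form of condition (1): your argument yields $g\mu_{s(g)}\sim\mu_{r(g)}$ only for $(\tm\circ\lambda)$-almost every $g$, and re-choosing the family $\{\mu_u\}$ so that the equivalences hold everywhere is not a consequence of essential uniqueness alone, since the defect must be repaired simultaneously for all $g$ lying over a $\tm$-null set of units; for that one step you should either supply a Ramsay-type selection argument or fall back on the bisection argument. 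You should also record, as the paper does via \cite[Corollary D.34]{Wcp}, that a jointly Borel cocycle version of $D$ must be fixed before $\Delta_\mu(g,y)=\Delta_{\tm}(g)D(g,gy)^{-1}$ can be used as a measurable density.
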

\begin{rmk}{\label{rm:haarsystem}}
  Recall that if $\lambda=\{\lambda^u\}_{u\in G^{(0)}}$ is a Haar system for
  $G$ then $\ol=\{\ol^x\}_{x\in X}$ defined via
  \[
    \int_{(G\ltimes X)^{x}} f(g,y)\,d\ol^x(g,y):=\int_{G^{\omega(x)}}f(g,g^{-1}\cdot x)\,d\lambda^{\omega(x)}(g)
  \]
 for all $f\in C_c(G\ltimes X)$ and $x\in X$ is a Haar system on $G\ltimes X$. We will use this Haar system for
  the action groupoid, see Ex. 2.1.7 on page 37 in \cite{W} and
  \cite[page 10]{AD23}.
\end{rmk}

\begin{proof}
  Assume that $\mu$ is a $G$-quasi-invariant measure on $X$. For $g\in
  G$, let $D(g,\cdot)$ be the Radon-Nikodym derivative
  $d(g\mu_{s(g)})/d\mu_{r(g)}$.  Note that since $\mu$ is a
  $G$-quasi-invariant measure and $\{\lambda^u\}$ is a Haar system on
  $G$,  $(G\ltimes X, \mu\circ \ol)$ is a measured groupoid (see
  \cite{Ra71},\cite{Ra82}). Therefore, using virtually the same
  arguments as in
  the proof of \cite[Corollary D.34]{Wcp}, we can choose $D$ to be Borel
   and  $D(g_1g_2,x)=D(g_1,g_2\cdot x)D(g_2, x)$ for all
   $(g_1,g_2)\in G^{(2)}$ and $\mu$-almost all $x$. Let $\Delta_{\tm}$ be the modular function
  associated with $\tm $ and set
  $\Delta_{\mu}(g,x):=D(g,x)\Delta_{\tm}(g)$ for all $(g,x)\in G\ltimes
  X$. Let $\ol$ be the Haar system on $G\ltimes X$ and let
  $\overline{\nu}:=\mu\circ \ol$. We prove that $\overline{\nu}\sim
  \overline{\nu}^{-1}$ and that a Radon-Nikodym derivative is given by
  $\Delta_{\mu}(g,x)$. Let $f\in C_c(G\ltimes X)$. We have
  \begin{align*}
    \int_{G\ltimes X}
    f(g,x)\,d\overline{\nu}(g,x)&=\int_X\int_{G^{\omega(x)}}f(g,g^{-1}\cdot
                                  x)\,d\lambda^{\omega(x)}(g)\,d\mu(x)\\
    &=\int_{G^{(0)}}\int_{X_u}\int_{G^u}f(g,g^{-1}\cdot
      x)\,d\lambda^u(g)\,d\mu_u(x)\,d\tm (u)\\
      \intertext{which, by Fubini's theorem,}
    &=\int_{G^{(0)}}\int_{G^u}\int_{X_u}f(g,g^{-1}\cdot
      x)\,d\mu_u(x)\,d\lambda^u(g)\,d\tm (u)\\
      \intertext{which, since $g\mu_{s(g)}\sim \mu_{r(g)}$,} 
                                &=\int_{G^{(0)}}\int_{G^u}\int_{X_{s(g)}}f(g,x)D(g,x)\,d\mu_{s(g)}(x)\,d\lambda^u(g)\,d\tm (u)\\
                                \intertext{which, since $\tm $ is
                              quasi-invariant for $G$, }
                                &=\int_{G^{(0)}}\int_{G_{u}}\int_{X_u}f(g,x)D(g,x)\,d\mu_u(x)\Delta_{\tm}(g)\,d\lambda_u(g)\,d\tm (u)\\
                                \intertext{which, using Fubini's theorem again,
                                }
                                &=\int_{G^{(0)}}\int_{X_u}\int_{G_u}f(g,x)D(g,x)\Delta_{\tm}(g)\,d\lambda_u(g)\,\mu_u(x)\,d\tm (u)\\
    &=\int_X\int_{G_u}f(g,x)D(g,x)\Delta_{\tm}(g)\,d\lambda_u(g)\,d\mu(x)\\
    &=\int_{G\ltimes X} f(g,x)\Delta_{\mu}(g,x)\,d\overline{\nu}^{-1}(g,x).
  \end{align*}
  Thus $\mu$ is quasi-invariant for $G\ltimes X$. 

  Assume now that $\mu$ is a quasi-invariant measure on $X$ for the action groupoid
$G\ltimes X$ and let $\Delta_{\mu}(g,x)$ be the associated Radon-Nikodym derivative.
Using \cite[Theorem I.5]{Wcp} we disintegrate $\mu$ with respect
to $\tm=\omega_*(\mu)$,
\[
  \int_X f(x)\,d\mu(x)=\int_{G^{(0)}} \int_{X_u}
  f(x)\,d\mu_u(x)\,d\tm (u),
\]
where $\{\mu_u\}$ is a family of Radon probability measures with
$\operatorname{supp}\mu_u\subseteq X_u$ and $f\in C_c(X)$. We prove first that
$\tm $ is a quasi-invariant measure for $G$. Let $f\in C_c(G)$. Then
\begin{align*}
  \tm \circ\lambda(f)&=\int_{G^{(0)}}\int_{G^{u}}f(g)\,d\lambda^u(g)\,d\tm (u)=\int_X\int_{G^{\omega(x)}}f(g)\,d\lambda^{\omega(x)}(g)\,d\mu(x)\\
  \intertext{which  by the quasi-invariance of $\mu$  }
  &=\int_X
  \int_{G_{\omega(x)}}f(g)\Delta_{\mu}(g,x)\,d\lambda_{\omega(x)}(g)\,d\mu(x)\\
  &=\int_{G^{(0)}}\int_{X_u}\int_{G_u}f(g)\Delta_{\mu}(g,x)\,d\lambda_u(x)\,d\mu_u(x)\,d\tm (u)\\
  \intertext{ which by Fubini's theorem}
  &=\int_{G^{(0)}}\int_{G_u}f(g)\left(\int_{X_u}\Delta_{\mu}(g,x)\,d\mu_u(x)\right)\,d\lambda_u(g)\,d\tm (u)\\
  \intertext{which, by defining
    $\ds\Delta_{\tm}(g):=\int_{X_{s(g)}}\Delta_{\mu}(g,x)\,d\mu_{s(g)}(x)$,
  }
  &=\int_{G^{(0)}}\int_{G_u}f(g)\Delta_{\tm}(g)\,d\lambda_u(g)\,d\tm (u)=\int_Gf(g)\Delta_{\tm}(g)\,d(\tm \circ
  \lambda)^{-1}(g).
\end{align*}
Therefore $\tm \circ \lambda\sim (\tm \circ \lambda)^{-1}$ and,
thus, $\tm $ is quasi-invariant for $G$.

Let $g\in G$. Then the set $U:=\{(g,x)\,:\,x\in X_{s(g)}\}$
is a  measurable bisection with respect to
$\overline{\nu}=\mu\circ \ol$. Note that
$s(U)=X_{s(g)}$ and $r(U)=X_{r(g)}$. Using the fact
that $\{\lambda^u\}$ is a Haar system for $G$, one can check that
$\overline{\nu}$ is quasi-invariant under $U$ in the sense
of \cite[Definition I.3.18 i)]{R80}. Since $\mu$ is quasi-invariant for
$G\ltimes X$, Proposition I.3.20 of \cite{R80} implies that $\mu$ is quasi-invariant
under $U$ in the sense of \cite[Definition I.3.18 ii)]{R80}. Thus
$g\mu_{s(g)}\sim \mu_{r(g)}$. 
\end{proof}

 \begin{dfn}
Given a groupoid $G$, if  $\mu$ is any Radon measure on $G^{(0)}$, then 
the regular representation on $\mu$, denoted  $\Ind\mu$, 
acts on $L^2(G,\nu^{-1})$ via
\begin{equation}
  \label{eq:ind_mu}
  \Ind\mu(f)(\xi)(g)=\int_G f(h)\xi(h^{-1}g)\,d\lambda^{r(g)}(h)
\end{equation}
for all $f\in C_c(G)$, $\xi\in L^2(G,\nu^{-1})$, and $g\in G$
(\cite[Definition II.1.8]{R80}; see also \cite[Proposition 1.41]{W}).

If $f\in C_c(G)$, then its reduced norm is
\begin{equation}
  \label{eq:red_norm}
  \Vert f\Vert_r:=\sup\bigl\{\,\Vert \Ind \delta_u(f)\Vert\,:\,u\in G^{(0)}\,\bigr\},
\end{equation}
where $\delta_u$ is the point mass at  $u\in
G^{(0)}$. The reduced $C^*$-algebra of $G$, $C_r^*(G)$, is the
completion of $C_c(G)$ under the reduced norm.  If $\mu$ is any 
Radon measure on $G^{(0)}$ with full support then $\Vert
f\Vert_r=\Vert (\Ind\mu)(f)\Vert$ for all $f\in C_c(G)$ (\cite[Corollary
5.23]{W}).

\end{dfn}

Recall (\cite[Definition II.1.6]{R80}; see also \cite[Definition
7.7]{W},\cite{R87,R91}) that a unitary representation of a groupoid
$G$ 
with Haar system $\lambda=\{\lambda^u\}_{u\in G^{(0)}}$ is a triple
$(\mu,G^{(0)}*\Hh,\hat{L})$ consisting of a quasi-invariant measure
$\mu$ on $G^{(0)}$, a Borel Hilbert bundle $G^{(0)}*\Hh$ over
$G^{(0)}$, and a Borel homomorphism $\hat{L}:G\to
\text{Iso}(G^{(0)}*\Hh)$ such that $\hat{L}(g)=(r(g),L_g,s(g))$ and
$L_g:\Hh(s(g))\to \Hh(r(g))$ is a Hilbert space isomorphism. Here
$\Hh(u)$ denotes the fiber over $u\in G^{(0)}$. 
Given a Borel Hilbert bundle
$G^{(0)}*\Hh$ and a measure $\mu$ on $G^{(0)}$, we can define the
Hilbert space
\[
  L^2(G^{(0)}*\Hh,\mu)=\{f\in B(G^{(0)}*\Hh)\,:\,u\mapsto \Vert
  f(u)\Vert^2_{\Hh(u)}\text{ is } \mu-  \text{integrable}\},
\]
where $B(G^{(0)}*\Hh)$ is the set of Borel sections of the bundle (see
\cite[Section 3.5]{W} for an outline of Borel bundles and \cite[Appendix
F]{Wcp} for a detailed study of them).

Given a 
unitary representation $(\mu,G^{(0)}*\Hh,\hat{L})$ of $G$
there is an $I$-norm bounded representation $L$ of $C_c(G)$ on
$L^2(G^{(0)}*\Hh,\mu)$ via the vector-valued integral
\[
  L(f)\xi(u)=\int_G f(g)L_g\xi(s(g))\Delta_{\mu}(g)^{-1/2}\,d\lambda^{u}(g)
\]
for $\xi\in L^2(G^{(0)}*\Hh,\mu)$, where $\Delta_{\mu}$ is the modular
function defined by $\mu$. 
The representation $L$ is called
the \emph{integrated form} of the unitary representation (see, for
example, \cite[Definition 7.14]{W}). Moreover,  by the powerful 
disintegration theorem of Renault (\cite{R87}; see also \cite[Theorem 8.2]{W})
 any such representation of $C_c(G)$ is equivalent to the
 integrated form of a unitary representation of $G$.
 
 \begin{rmk}\label{rem:equiv_rep}
 Two unitary representations $L=(\mu, G^{(0)}*\Hh,\hat{L})$ and $L'
 =(\mu, G^{(0)}*\Hh',\hat{L}')$ of $G$ having the same quasi-invariant
 measure $\mu$ are equivalent, $L\cong L'$,
 if $\go*\Hh$ and $\go*\Hh'$ are isomorphic as Hilbert bundles (see,
 for example, \cite[Definition F.22]{Wcp}) via a Borel bundle map
 $U:G^{(0)}*\Hh\to G^{(0)}*\Hh'$ which intertwines $L$ and $L'$ in the
 sense that $U(r(g))\circ L_g=L'_g\circ U(s(g))$ for all $g\in
 G$ (\cite[Definition II.1.6]{R80}). Recall that $U$ is determined by a family of unitaries
 $U(u):\Hh(u)\to \Hh'(u)$ for all $u\in \go$.
 
 Given two unitary representations $L=(\mu, G^{(0)}*\Hh,\hat{L})$ and $L' =(\mu, G^{(0)}*\Hh',\hat{L}')$, we can construct their direct sum  $L'\oplus    L'=(\mu, G^{(0)}*(\Hh\oplus\Hh'), \widehat{L\oplus L'})$ and their tensor product  $L\otimes  L'=(\mu, G^{(0)}*(\Hh\otimes \Hh'), \widehat{L\otimes L'})$ by taking
 \[\widehat{L\oplus L'}(g)=(r(g), L_g\oplus L'_g,s(g)),\;\; \widehat{L\otimes L'}(g)=(r(g), L_g\otimes L'_g,s(g)),\]
 where the direct sums and the tensor products of the Hilbert bundles are done fiberwise.
\end{rmk}
\begin{example}\label{ex:trivialrep}
The
  trivial representation $\iota=(\mu,G^{(0)}\times \CC,i_G)$ on $\mu$,
  where $\mu$ is a quasi-invariant measure, 
  $\go\times \CC$ is the trivial one-dimensional line bundle and
  $(i_G)_g(z)=z$ for all $z\in \CC$. Note that $L\otimes \iota\cong L$
  for all unitary representations $L$ of $G$ with the same
  quasi-invariant measure $\mu$.

\end{example}

 \begin{example}\label{ex:regular_rep}
   Assume that $\mu$ is a quasi-invariant measure on $G^{(0)}$. Let
   $L^2(G,\lambda):=\{L^2(G^u,\lambda^u)\}_{u\in G^{(0)}}$. The (left)
   regular representation $\rho$ of $G$ on $\mu$ is the unitary
   representation $(\mu,G^{(0)}*L^2(G,\lambda),\hat{\rho})$, where
   $$\rho_g:L^2(G^{s(g)},\lambda^{s(g)})\to L^2(G^{r(g)},\lambda^{r(g)})$$ is defined
   via $\rho_g(\xi)(h)=\xi(g^{-1}h)$ for all $\xi\in
   L^2(G^{s(g)},\lambda^{s(g)})$ and $h\in G^{r(g)}$. Even though in general $\rho$ depends on $\mu$, to ease the notation we  write $\rho$ instead of $\rho^\mu$, especially when the measure $\mu$ is fixed.
   
   Its integrated
   form is called the (left) regular representation of $C_c(G)$ on
   $\mu$ and it is unitarily equivalent with $\Ind \mu$ defined in
   \eqref{eq:ind_mu} via $W:L^2(G, \nu)\to L^2(G, \nu^{-1})$,
   $W\xi=\xi\Delta_\mu^{1/2}$ (\cite[Proposition II.1.10]{R80}; see also
   \cite[Definition 3.29 and Exercise 3.30]{Mu_CBMS}). Therefore, if
   $\mu$ has full support,
   $\Vert f\Vert_r=\Vert \rho(f)\Vert$ for all $f\in C_c(G)$.
 \end{example}
 
 Recall that if $A$ is a $C^*$-algebra, $\pi$ is a representation of $A$ and $S$ is a set of representations of $A$, the following assertions are equivalent:

(1) $\ker\pi\supseteq \bigcap\{\ker \sigma\;\mid\; \sigma\in S\}$;

(2) each vector state associated with $\pi$ is a weak-$\ast$ limit of states that are sums of vector functionals associated to representations in $S$.

If either assertion holds, we say that $\pi$ is weakly contained in $S$ and write $\pi\prec S$. If $S=\{\sigma\}$ has only one element, we say that $\pi$ is weakly contained in $\sigma$ and write $\pi\prec \sigma$. In this case there is a surjective homomorphism $C^*(\sigma)\to C^*(\pi)$ given by $\sigma(a)\mapsto \pi(a)$, where $C^*(\pi)$ is the $C^*$-algebra generated by $\pi(a)$ for $a\in A$.
We say that $\pi$ and $\sigma$ are weakly equivalent if and only if $\pi\prec \sigma$ and $\sigma\prec \pi$; this happens if and only if $\ker \pi=\ker \sigma$, and in this case $C^*(\pi)\cong C^*(\sigma)$.

The following definitions and results about amenability  are taken from \cite{AR} and \cite{AR01};  see also chapter 9 in \cite{W}.

\begin{dfn} (see Definition 2.6 in \cite {AR01} and Proposition 2.2.6 in \cite{AR})

Let $G$ be a locally compact groupoid with Haar system $\{\lambda^u\}$. A quasi-invariant measure $\mu$ on $G^{(0)}$ is amenable if there is a net $\{f_i\}$ of non-negative measurable functions on $G$ such that

(1) For all $i$ and a.e. $u\in G^{(0)}$ we have $\ds \int_G f_id\lambda^u=1$;

(2) The functions $\ds g\mapsto \int_G|f_i(g^{-1}h)-f_i(h)|d\lambda^{r(g)}(h)$ tend to the zero function in the weak-$\ast$-topology of $L^\infty(G,\mu\circ\lambda)$.

The groupoid is called measurewise amenable in case each quasi-invariant measure on $G^{(0)}$ is amenable.

\end{dfn}

\begin{dfn}
We say that a locally compact groupoid $G$ is topologically amenable if it admits a continuous approximate invariant mean, i.e. a net $\{m_i^u\}$ of probability measures on $G^u$ for each $u\in G^{(0)}$ which is approximately invariant, in the sense that the function $g\mapsto \|gm_i^{s(g)}-m_i^{r(g)}\|_1$ tends to zero uniformly on the compact subsets of $G$, where $\|\cdot\|_1$ denotes the total variation norm.

\end{dfn}
When $G$ admits a continuous Haar system $\{\lambda^u\}$, we have
\begin{prop}
A locally compact groupoid $G$ with Haar system $\{\lambda^u\}$ is topologically amenable if and only if there exists a net $\{f_i\}$ of non-negative continuous functions on $G$ such that

(i) For all $i$ and $u\in G^{(0)}$ we have $\ds \int_G f_id\lambda^u=1$;

(ii) The functions $\ds g\mapsto \int_G |f_i(g^{-1}h)-f_i(h)|d\lambda^{r(g)}(h)$ tend to the zero function  uniformly on the compact sets of $G$.
\end{prop}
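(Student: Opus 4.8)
The plan is to set up a dictionary between non-negative continuous functions $f$ on $G$ and continuous systems of measures $\{m^u\}_{u\in\go}$ on the range fibers $G^u$, via $dm^u=f|_{G^u}\,d\lambda^u$, and to observe that under this dictionary the function-theoretic invariance in (ii) and the measure-theoretic invariance in the definition of topological amenability literally coincide. The driving computation, which I would establish first, is that for any such $f$ and its associated system, left-invariance of the Haar system makes the density of $g\,m^{s(g)}$ with respect to $\lambda^{r(g)}$ equal to $h\mapsto f(g^{-1}h)$, so that for every $g\in G$
\[
\|g\,m^{s(g)}-m^{r(g)}\|_1=\int_{G^{r(g)}}|f(g^{-1}h)-f(h)|\,d\lambda^{r(g)}(h).
\]
This single identity powers both implications.

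For the implication ($\Leftarrow$): given a net $\{f_i\}$ as in (i)--(ii), I would set $m_i^u:=f_i\,\lambda^u$. Condition (i) makes each $m_i^u$ a probability measure on $G^u$; continuity of $f_i$ together with continuity of the Haar system makes $\{m_i^u\}$ a continuous system, since $u\mapsto\int F\,dm_i^u=\int_{G^u}Ff_i\,d\lambda^u$ is continuous for $F\in C_c(G)$; and the displayed identity turns condition (ii) verbatim into the assertion that $g\mapsto\|g\,m_i^{s(g)}-m_i^{r(g)}\|_1$ tends to $0$ uniformly on compact subsets of $G$. Hence $\{m_i^u\}$ is a continuous approximate invariant mean and $G$ is topologically amenable.

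For the implication ($\Rightarrow$): given a continuous approximate invariant mean $\{m_i^u\}$, I would regularize it to produce continuous densities. The plan is to fix a mollifier, namely a continuous $\psi\ge 0$ on $G$ concentrated near $\go$ with $\int_{G^v}\psi\,d\lambda^v=1$ for all $v$ (obtained from a strictly positive continuous function by dividing by its fiberwise Haar integral), and to define
\[
f_i(g):=\int_{G^{r(g)}}\psi(k^{-1}g)\,dm_i^{r(g)}(k),
\]
the fiberwise convolution $m_i*(\psi\lambda)$ with the probability-density system $\{\psi\lambda^v\}$. A change of variables using left-invariance and the normalization of $\psi$ gives $\int_{G^u}f_i\,d\lambda^u=\int_{G^u}1\,dm_i^u=1$, so (i) holds, and the joint continuity of $(k,g)\mapsto\psi(k^{-1}g)$ together with the weak-$*$ continuity of $\{m_i^u\}$ gives that $f_i$ is continuous. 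The crucial point is that convolution with a probability-density system commutes with left translation and contracts the total variation, so that
\[
\|g\,(m_i*\psi\lambda)^{s(g)}-(m_i*\psi\lambda)^{r(g)}\|_1=\|(g\,m_i^{s(g)}-m_i^{r(g)})*\psi\lambda\|_1\le\|g\,m_i^{s(g)}-m_i^{r(g)}\|_1,
\]
whence, again by the identity above, the densities $f_i$ inherit the approximate invariance (ii) uniformly on compact subsets of $G$.

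I expect the main obstacle to be the regularization step in ($\Rightarrow$): constructing a mollifier with exact fiberwise normalization when $\go$ is non-compact (so $\psi$ is continuous but not compactly supported), controlling its support so that $k\mapsto\psi(k^{-1}g)$ is genuinely compactly supported on each fiber and $f_i$ is continuous, and verifying the convolution contraction rigorously for measures $m_i^u$ that need not be absolutely continuous with respect to $\lambda^u$. These are precisely the technical points treated in \cite{AR} (see also \cite{AR01}), and I would either adapt those arguments or, where only the equivalence is needed downstream, invoke the corresponding statement there directly.
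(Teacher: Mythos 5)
The paper does not prove this proposition: it is stated as a known fact imported from \cite{AR} and \cite{AR01} (it is essentially Proposition 2.2.13 there, combined with the characterization of approximate invariant means), so there is no in-paper argument to compare against. Your proposal is a correct reconstruction of the standard proof from those references. The central identity
\[
\|g\,m^{s(g)}-m^{r(g)}\|_1=\int_{G^{r(g)}}|f(g^{-1}h)-f(h)|\,d\lambda^{r(g)}(h),\qquad dm^u=f|_{G^u}\,d\lambda^u,
\]
is right (left-invariance of $\{\lambda^u\}$ gives $g\,m^{s(g)}=f(g^{-1}\cdot)\,\lambda^{r(g)}$), and it does make the direction ($\Leftarrow$) immediate. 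For ($\Rightarrow$), your convolution formula, the computation verifying normalization (i), the commutation of fiberwise convolution with left translation, and the total-variation contraction $\|\sigma*\psi\lambda\|_1\le\|\sigma\|_1$ are all correct as stated. The one place where genuine work remains is exactly the one you flag: the existence of a continuous $\psi\ge 0$ with $\int_{G^v}\psi\,d\lambda^v=1$ for every $v\in\go$ and with support proper over $\go$ (so that $k\mapsto\psi(k^{-1}g)$ has compact support in each fiber, making $f_i$ finite and continuous via the weak-$*$ continuity of $u\mapsto m_i^u$). When $\go$ is compact a single normalized element of $C_c(G)^+$ suffices; in general one needs a locally finite partition-of-unity construction over $\go$. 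Since you identify this gap precisely and the construction is carried out in \cite{AR}, the proposal is acceptable, but as written it is a proof sketch rather than a complete proof of the implication ($\Rightarrow$).
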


\begin{example}
The Renault-Deaconu groupoid $G(X,T)$ constructed from a local homeomorphism $T:X\to X$ as in Example \ref{RD} is topologically amenable, see \cite{R00} and \cite[Proposition 3.1]{ R15}. 
\end{example}

Note that amenability for groupoids is equivalent to the weak containment of the trivial representation in the regular representation.
\bigskip


\section{Inducing representations from $G\ltimes X$ to $G$ and the
  Koopman representation}
\label{sec:induc-repr-from}
\bigskip

Assume now that the groupoid $G$ acts (on the left) on $X$ and let $G\ltimes X$ be the
action groupoid. We assume as in the previous section that
$\lambda=\{\lambda^u\}_{u\in G^{(0)}}$ is a Haar system on $G$ and the
corresponding Haar system on $G\ltimes X$ is denoted by
$\ol=\{\ol^x\}_{x\in X}$ (see Remark \ref{rm:haarsystem}). Following
the well known  case of the group action groupoid (see,
for example, \cite[Page 5]{AD03} for group actions and \cite[Page 17]{R91} for groupoid
dynamical systems) we define the induction map from unitary
representations of $G\ltimes X$ to unitary representations of $G$.  Let
$(\mu,X*\Hh,\hat{L})$ be a unitary representation of $G\ltimes
X$ and let $\tm =\omega_*(\mu)$. Since $\mu$ is quasi-invariant
for $G\ltimes X$, Theorem \ref{thm:quasi-invariant} implies that
$\tm $ is quasi-invariant for $G$ and there is a decomposition
of $\mu$ relative to $\omega$ such that $g\mu_{s(g)}\sim
\mu_{r(g)}$. Let $D:G\ltimes X\to \RR^+$ be a Borel choice of the
Radon-Nikodym derivative $\ds \frac{d g\mu_{s(g)}}{d\mu_{r(g)}}$ such that
$D(g_1g_2,x)=D(g_1,g_2\cdot x)D(g_2, x)$ for all 
   $(g_1,g_2)\in G^{(2)}$ and $\mu$-almost all $x$ (see the proof of Theorem \ref{thm:quasi-invariant} ). 
For each $u\in G^{(0)}$ define \[ \Kk(u):=L^2(X_u,\mu_u):=\int_{X_u}^\oplus
\Hh(x)\,d\mu_u(x),\]
where $X_u=\omega^{-1}(u)$. The Borel structure on $\Hh$ defines  a natural
Borel structure on $\Kk:=\{\Kk(u)\}_{u\in G^{(0)}}$
making $G^{(0)}*\Kk$ a Borel Hilbert bundle.

\begin{dfn}\label{dfn:inducedrep}
  The \emph{induced representation} of  a unitary representation
  $(\mu,X*\Hh,\hat{L})$ of $G\ltimes X$ to $G$
  is the unitary representation
  $(\tm ,G^{(0)}*\Kk,\Ind\hat{L})$ of $G$,
  where $\Ind\hat{L}:G\to \operatorname{Iso}(G^{(0)}*\Kk)$,
  $\Ind\hat{L}_{g}=(r(g),\Ind L_{g},s(g))$  and, for $g\in G$,
  $\Ind L_{g}:\Kk({s(g)})\to \Kk({r(g)})$ is
  defined via
  \[\Ind L_{g}\xi(x)=D(g^{-1},x)^{1/2}L_{(g,g^{-1}\cdot x)}\bigl(
    \xi(g^{-1}\cdot x)\bigr)\]
  for all $\xi \in \Kk(s(g))$.
\end{dfn}

\begin{rmk}
  Note that the above Definition can be deduced from \cite[Sections
  3.2 and 3.3]{R14} with a bit of effort. Indeed, let $Z=G\ltimes X$
  viewed as a topological space. Then $G$ acts properly on the left on
  $Z$ via the natural action, and $G\ltimes X$ acts properly and freely
  on the right on $Z$, since the action of any groupoid on itself is
  free and proper.  Moreover, one can check that $Z$ is a groupoid
  correspondence in the sense of Holkar (see, for example, Definition
  2.3 of \cite{R14}), with the cocycle $\Delta$ in the definition
  being the Radon-Nykodim derivative on $G\ltimes X$ corresponding to
  $\mu$ and the system of measures $\alpha=\{\alpha_u\}$ given by
  $\alpha_u=\lambda_u$
  for all $u\in \go$. Then, following the steps in Section 3.2 and 3.3
  of \cite{R14}, one can recover our Definition \ref{dfn:inducedrep}.
\end{rmk}

As discussed above, the induced representation of $G$ extends to an $I$-bounded
representation  $\Ind L:C_c(G)\to
\Bb(L^2(G^{(0)}*\Kk,\tm ))$  via the vector integral
\begin{align*}
  \Ind L(f)\xi(u)&=\int_{G^{u}}
                                f(g)\Ind L_g\xi(s(g))\Delta_{\tm}(g)^{-1/2}\,d\lambda^u(g)
\end{align*}
for all $\xi \in L^2(G^{(0)}*\Kk,\tm)$.
Equivalently, the induced representation is characterized by (see
\cite[Proposition 7.12]{W})
\begin{multline*}
  \la \Ind L(f)\xi\,,\,\eta \ra\\
  =\int_{G^{(0)}}\int_{G^u}f(g) \bigl\langle 
  \Ind L_g\xi(s(g))\,,\,
  \eta(r(g))\bigr\rangle\Delta_{\tm}(g)^{-1/2}\,d\lambda^u(g)\,d\tm (u)\\
  =\int_{G^{(0)}}\int_{G^u}f(g)\int_{X_u}D(g^{-1},x)^{1/2}\bigl\langle
  L_{(g,g^{-1}\cdot x)}\xi(s(g))(g^{-1}\cdot
  x)\,,\,\eta(r(g))(x)\bigr\rangle \\
   \,d\mu_u(x) \Delta_{\tm}(g)^{-1/2}\,d\lambda^u(g)\,d\tm (u)\\
  =\int_{G^{(0)}}\int_{X_u}\int_{G^u}f(g)\Delta_\mu{(g^{-1},x)}^{1/2}\bigl\langle
  L_{(g,g^{-1}\cdot x)}\xi(s(g))(g^{-1}\cdot
  x)\,,\,\eta(r(g))(x)\bigr\rangle\\
 \,d\lambda^{u}(g)d\mu_u(x)d\tm (u)\\
  =\int_X\int_{G^{\omega{(x)}}}f(g)\Delta_\mu{(g^{-1},x)}^{1/2}\bigl\langle
  L_{(g,g^{-1}\cdot x)}\xi(s(g))(g^{-1}\cdot
  x)\,,\,\eta(r(g))(x)\bigr\rangle\\
d\lambda^{\omega(x)}(g)\,d\mu(x),\\
\end{multline*}
for all $\xi,\eta \in L^2(G^{(0)}*\Kk,\tm)$, where $\bigl\langle \cdot\,,\,\cdot \bigr\rangle$ represent the inner-products in the
corresponding Hilbert spaces and  $\Delta_{\mu}(g^{-1},x)=D(g^{-1},x)\Delta_{\tm}(g^{-1})$ is the
Radon-Nikodym derivative on $G\ltimes X$ corresponding to $\mu$.

\begin{dfn}
Assuming that the groupoid $G$ acts on $X$, let $\mu$ be a $G$-quasi-invariant measure on $X$ or, equivalently,
  a quasi-invariant measure for $G\ltimes X$. We define the Koopman
  representation $\kappa^\mu$ of $G$ to be the induced 
  representation  of the trivial representation $(\mu,X\times
  \CC,i_{G\ltimes X})$,  where, recall from Example
  \ref{ex:trivialrep},  $(i_{G\ltimes X})_{(g,x)}(z)=z$ for all
  $(g,x)\in G\ltimes   X$ and $z\in \CC$. Since the measure $\mu$ is
  typically fixed, we write shortly $\kappa$ for $\kappa^\mu$ when there is no
  possibility for confusion.  In general, the relationship between $\kappa$ and $\mu$ is complicated, and we plan to address this issue in a future project. In the last section, we illustrate this relationship in some particular examples.
\end{dfn}

Therefore the Koopman representation $\kappa$ is given by
$(\tm ,G^{(0)}*\Kk,\hat{\kappa})$, where 
$\Kk=\{L^2(X_u,\mu_u)\}_{u\in G^{(0)}}$ and, for $g\in G$,
\[\kappa_g:L^2(X_{s(g)},\mu_{s(g)})\to
L^2(X_{r(g)},\mu_{r(g)})\] is given by
\[
  \kappa_g\xi(x)=D(g^{-1},x)^{1/2}\xi(g^{-1}\cdot x),
\]
which recovers the classical definition for group actions (see Definition 13.A.5 in \cite{BH} for example).

Hence $\kappa$ extends to an $I$-bounded representation of
$C_c(G)$ on $L^2(G^{(0)}*\Kk,\tm )$ via
\[
  \kappa(f)\xi(u)=\int_{G^{u}}f(g)\kappa_g\xi(s(g))\Delta_{\tm}(g)^{-1/2}\,d\lambda^u(g).
\]
Note that we can identify $L^2(G^{(0)}*\Kk,\tm )$ with
$L^2(X,\mu)$ via the unitary $V:L^2(G^{(0)}*\Kk,\tm )\to
L^2(X,\mu)$, $V(\xi)(x)=\xi(\omega(x))(x)$. 
Therefore we can view the Koopman  representation as a representation
of $C_c(G)$ on $L^2(X,\mu)$ via
\begin{align*}
  \kappa(f)\xi(x)&=\int_{G^{\omega(x)}}f(g)\kappa_g(\xi)(x)\Delta_{\tm}(g)^{-1/2}\,d\lambda^{\omega(x)}(g)\\
  &=\int_{G^{\omega{(x)}}}f(g)\xi(g^{-1}\cdot
    x)\Delta_{\mu}(g^{-1},x)^{1/2}\,d\lambda^{\omega{(x)}}(g), 
\end{align*}
where recall that $\Delta_{\mu}(g^{-1},x)=D(g^{-1},x)\Delta_{\tm}(g^{-1})$.
Equivalently, $\kappa$ is characterized by
\begin{multline*}
  \langle  \kappa(f)\xi\; ,\;\eta \rangle=\int_X
  \int_{G^{\omega(x)}}f(g)\xi(g^{-1}\cdot
                                      x)\overline{\eta(x)}\Delta_{\mu}(g^{-1},x)^{1/2}d\lambda^{\omega(x)}(g)d\mu(x)\\
  =\int_{G^{(0)}}\int_{X_u} \int_{G^u} f(g)\xi(g^{-1}\cdot
                                      x)\overline{\eta(x)}\Delta_{\mu}(g^{-1},x)^{1/2}d\lambda^{u}(g)d\mu_u(x)d\tm(u)
\end{multline*}
for all $\xi,\eta\in L^2(X,\mu)$.

We denote by $C^*(\kappa)$  the closure of $\kappa(C_c(G))$ in the operator norm of $\Bb(L^2(X,\mu))$.

\begin{example}
Let $(G,E)$ be a level transitive self-similar groupoid action (see \cite{D22}) such that $|uE^1|=p\ge2$ is constant for all $u\in E^0$. Then $G$ acts on $X=E^\infty$ and the uniform probability measure $\nu$ on $E^\infty$ is $G$-invariant. Then the $C^*$-algebra $C^*(\kappa)$ of the Koopman representation of $G$ on $L^2(X,\nu)$ is residually finite dimensional and it has a normalized trace $\tau_0$. For $G$ an amenable group and for an essentially free self-similar action, it is proved in Theorem 9.14 of \cite{G} that  $C^*(\kappa)\cong C^*_r(G)$. We believe that this isomorphism holds true for level transitive self-similar amenable groupoid actions.
\end{example}

\bigskip

\begin{rmk}
If $X=G^{(0)}$ and   $G$ acts on $X$
  via $g\cdot s(g)=r(g)$, then, since $G\ltimes G^{(0)}\cong G$,  the Koopman representation $\kappa^\mu$ of $G$ associated to a $G$-quasi-invariant measure  $\mu$ on
  $G^{(0)}$ is given by the
  trivial representation $(\mu,G^{(0)}\times \CC,i_G)$.

When $X=G$ and $G$ acts on itself by left multiplication, the Koopman representation  $\kappa^\mu$ is just the left regular representation $\rho=\rho^G$. If $X=G/H$ where $H$ is a closed subgroupoid, the Koopman representation is the quasi-regular representation $\rho^{G/H}$.
\end{rmk}

\begin{rmk}
Given a closed subgroupoid $H$ of $G$ with the same unit space, recall
that $G$ acts on $G/H$ with $\omega:G/H\to G^{(0)}, \omega(gH)=r(g)$.
Given a unitary representation $L=(\nu, H^{(0)}*\Kk,\hat{L})$ of $(H,\beta)$,
one can induce it to a representation of $(G,\lambda)$ following the steps in
\cite[Section 3]{R14}. Specifically, under the assumption that
$H^{(0)}=G^{(0)}$, one can define a groupoid correspondence from
$(G,\lambda)$ to $(H,\beta)$ in the sense of \cite[Definition
2.3]{R14} by setting $X=G$ and $\alpha_u=\lambda_u$ for all $u\in
\go$. Therefore,  $\nu$ defines  a $G$-quasi-invariant measure
$\mu$  on $G/H$ as in \cite[Section 3.2]{R14}. Let $\tm$ and
$\{\mu_u\}_{u\in\go}$ as in Definition 
\ref{def:qi}. Then the induced representation $\Ind_H^G L$ of $G$ is
$(\tilde\mu, G^{(0)}*\Hh,\Ind_H^G \hat{L})$,  where $\Hh$ is the
Hilbert bundle obtained from the completion of \[\{\xi:G\to \Kk:
  \xi(g)\in \Kk(s(g))\;\text{and}\;\xi(gh)=L_{h^{-1}}\xi(g)\}\] and  
\[(\Ind_H^G L)_g\xi(x)=D(g^{-1},xH)^{1/2}\xi(g^{-1}x).\]
We have $\ds \Hh(u)=L^2((G/H)_u*\Kk, \mu_u)=\int_{(G/H)_u}^{\oplus}\Kk(x)d\mu_u(x)$, where $(G/H)_u=\{gH\in G/H: r(g)=u\}$.
The induced representation of a direct sum is the direct sum of induced representations. 
\end{rmk}

\begin{example}
If $H=G^{(0)}$ and $\iota=(\mu, G^{(0)}\times \mathbb C, i_H)$ is the trivial representation of $H$ with $(i_H)_u(z)=z$, then $\Ind_H^G \iota$ is the left regular representation $\rho^G$ of $G$. For a general closed subgroupoid $H$, $\Ind_H^G\iota$ is the quasi-regular representation $\rho^{G/H}$ of $G$ on $L^2(G/H, \mu)$.
\end{example}

The following result is inspired from the similar result in the case
of groups, see \cite[Appendix E]{BHV}. 
\begin{prop}\label{E.2.5}
Suppose $H$ is a   closed subgroupoid of $G$ with the same unit space.     Let $L=(\mu, G^{(0)}*\Hh,\hat{L})$ be a unitary representation of $G$ and let $M=(\mu,H^{(0)}*\Kk,\hat{M})$ be a unitary representation of $H$. Then $L\otimes\Ind_H^G M$ is equivalent to $\Ind_H^G((L|_H)\otimes M).$
\end{prop}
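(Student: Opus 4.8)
The plan is to imitate the classical projection formula for groups (\cite[Appendix E]{BHV}) by writing down an explicit intertwining isomorphism of Borel Hilbert bundles and invoking the notion of equivalence from Remark \ref{rem:equiv_rep}. Both representations in the statement are representations of $G$ carried by the quasi-invariant measure $\mu$ on $\go$ (the modular data $D(g,\cdot)$ and $\Delta_{\tm}$ attached to $\Ind_H^G$ depend only on $\mu$ and the Haar systems, not on the representation being induced), so the tensor product $L\otimes\Ind_H^G M$ and the equivalence of Remark \ref{rem:equiv_rep} are taken relative to $\mu$. Using the description of $\Ind_H^G$ recalled just before the statement, the fiber of $L\otimes\Ind_H^G M$ over $u$ is $\Hh(u)\otimes L^2((G/H)_u*\Kk,\mu_u)$, realized as the $M$-covariant sections $g\mapsto v\otimes\xi(g)$, while the fiber of $\Ind_H^G((L|_H)\otimes M)$ over $u$ consists of the $((L|_H)\otimes M)$-covariant sections $\eta$ with $\eta(g)\in\Hh(s(g))\otimes\Kk(s(g))$ and $\eta(gh)=(L_{h^{-1}}\otimes M_{h^{-1}})\eta(g)$.

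For each $u\in\go$ I would define $U(u)\colon \Hh(u)\otimes L^2((G/H)_u*\Kk,\mu_u)\to \Ind_H^G((L|_H)\otimes M)(u)$ on elementary tensors by
\[
U(u)(v\otimes\xi)(g)=L_{g^{-1}}v\otimes\xi(g),\qquad r(g)=u,
\]
extended by linearity and continuity. Since $L_{g^{-1}}\colon\Hh(u)\to\Hh(s(g))$ and $\xi(g)\in\Kk(s(g))$, the vector lies in the correct fiber $\Hh(s(g))\otimes\Kk(s(g))$, and the computation $U(u)(v\otimes\xi)(gh)=L_{h^{-1}g^{-1}}v\otimes M_{h^{-1}}\xi(g)=(L_{h^{-1}}\otimes M_{h^{-1}})U(u)(v\otimes\xi)(g)$ shows $U(u)(v\otimes\xi)$ satisfies the covariance defining the right-hand fiber. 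To see $U(u)$ is unitary I would factor it as the canonical unitary $\Hh(u)\otimes\int^\oplus_{(G/H)_u}\Kk(s(g))\,d\mu_u\cong\int^\oplus_{(G/H)_u}\Hh(u)\otimes\Kk(s(g))\,d\mu_u$ followed by the fieldwise map $v\otimes w\mapsto L_{g^{-1}}v\otimes w$; since each $L_{g^{-1}}$ is a unitary between fibers, the latter preserves fiberwise norms and hence the $\mu_u$-integrated norm, so $U(u)$ is isometric, with inverse given fieldwise by $L_g\otimes\operatorname{id}$. Equivalently, one checks directly that $\langle U(u)(v_1\otimes\xi_1),U(u)(v_2\otimes\xi_2)\rangle=\langle v_1,v_2\rangle\int_{(G/H)_u}\langle\xi_1(g),\xi_2(g)\rangle\,d\mu_u$, using unitarity of $L_{g^{-1}}$ on the first tensor factor.

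It then remains to verify that $U=\{U(u)\}$ intertwines the two actions, i.e. $U(r(g_0))\circ(L\otimes\Ind_H^G M)_{g_0}=\Ind_H^G((L|_H)\otimes M)_{g_0}\circ U(s(g_0))$ for all $g_0$. Both sides, applied to $v\otimes\xi$ and evaluated at $x$ with $r(x)=r(g_0)$, yield $D(g_0^{-1},xH)^{1/2}\,L_{x^{-1}g_0}v\otimes\xi(g_0^{-1}x)$: on the left by first applying $L_{g_0}$ and $(\Ind_H^G M)_{g_0}$ and then $U$, on the right by first applying $U$ and then the $D(g_0^{-1},xH)^{1/2}$-twisted translation, the factor $L_{x^{-1}g_0}=L_{x^{-1}}L_{g_0}$ matching by the homomorphism property of $L$. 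Crucially, the Radon--Nikodym factor $D(g_0^{-1},xH)^{1/2}$ is identical on both sides, since it depends only on the measure $\mu$ on $G/H$ and not on the inducing representation, so it cancels and poses no obstruction.

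The routine algebra is a direct transcription of the group case, so the main obstacle is the measure-theoretic and Borel bookkeeping: one must check that $U$ is genuinely an isomorphism of Borel Hilbert bundles in the sense of \cite[Definition F.22]{Wcp} (measurability of $u\mapsto U(u)$ as a field of operators), that the covariant-section realizations of the two fibers are the correct Hilbert-space models with $H$-invariant integrands descending to $(G/H)_u$, and that the two inductions are indeed taken with respect to the same quasi-invariant measure and Radon--Nikodym cocycle $D$. Once these identifications are in place, $U$ implements the desired equivalence $L\otimes\Ind_H^G M\cong\Ind_H^G((L|_H)\otimes M)$ by Remark \ref{rem:equiv_rep}.
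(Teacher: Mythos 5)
Your proposal is correct and follows essentially the same route as the paper: both define the fiberwise intertwiner $U(u)(v\otimes\xi)(g)=L_{g^{-1}}v\otimes\xi(g)$, check covariance and fiberwise unitarity, and verify the intertwining identity $D(g^{-1},xH)^{1/2}L_{x^{-1}g}v\otimes\xi(g^{-1}x)$ on both sides. Your additional remarks on the Borel bookkeeping and the independence of the Radon--Nikodym factor from the inducing representation only make explicit what the paper leaves implicit.
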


\begin{proof}
If $G^{(0)}*\Mm$ and $G^{(0)}*\Ll$ are the Hilbert bundles of
$\Ind_H^G M$ and $\Ind_H^G((L|_H)\otimes M)$ respectively, define a
Borel bundle map $U:G^{(0)}*(\Hh\otimes \Mm)\to G^{(0)}*\Ll$ by
\[U(u)(\xi\otimes \eta)(x)=L_{x^{-1}}\xi\otimes\eta(x),\; \forall
  u\in\go\text{ and } x\in G_u\]
and verify that $U(u)$ is unitary for each $u$. Moreover, $U$ intertwines $\Ind((L|_H)\otimes M)$ and $L\otimes\Ind_H^G M$ since
\[\left(\left(\Ind_H^G(L|_H\otimes M)_g\right)U(s(g))(\xi\otimes\eta)\right)(x)=D(g^{-1},xH)^{1/2}L_{x^{-1}g}\xi\otimes\eta(g^{-1}x)=\]\[=U(r(g))\left(L_g\xi\otimes(\Ind_H^GM)_g\eta\right)(x).\]
\end{proof}

\begin{cor}\label{E.2.6}

Let $G$ be a locally compact groupoid and let $H$ be a closed subgroupoid with the same unit space. If $L=(\mu, G^{(0)}\ast \Hh, \hat{L})$ is a representation of $G$, then $\Ind_H^G(L|_H)$ is equivalent to $L\otimes \rho^{G/H}$, where $\rho^{G/H}$ is the quasi-regular representation of $G$ on $L^2(G/H, \mu)$. In particular, for $H=G^{(0)}$, $L\otimes \rho^G$ is equivalent to $(\dim L)\otimes \rho^G$, where $(\dim L)_u=\text{id}_{\Hh(u)}$.

\end{cor}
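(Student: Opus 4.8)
The plan is to obtain both assertions as instances of Proposition~\ref{E.2.5}, specializing the representation $M$ of $H$ to the trivial representation $\iota=(\mu,H^{(0)}\times\CC,i_H)$. First I would recall, from the Example preceding this Corollary, that $\Ind_H^G\iota=\rho^{G/H}$ is exactly the quasi-regular representation of $G$ on $L^2(G/H,\mu)$. Feeding $M=\iota$ into Proposition~\ref{E.2.5} then yields
\[
  L\otimes\Ind_H^G\iota\cong\Ind_H^G\bigl((L|_H)\otimes\iota\bigr).
\]

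Next I would simplify the right-hand side using the absorbing property of the trivial representation recorded in Example~\ref{ex:trivialrep}, applied to the groupoid $H$: since $L|_H$ and $\iota$ are representations of $H$ carrying the same quasi-invariant measure $\mu$, we have $(L|_H)\otimes\iota\cong L|_H$, the underlying bundle isomorphism being $\Hh(u)\otimes\CC\cong\Hh(u)$ and the intertwining being immediate because $i_H$ acts as the identity on $\CC$. Since induction preserves equivalence of representations, applying $\Ind_H^G$ gives $\Ind_H^G\bigl((L|_H)\otimes\iota\bigr)\cong\Ind_H^G(L|_H)$. Combining this with the previous display, and using $\Ind_H^G\iota=\rho^{G/H}$ on the left, produces the first assertion $\Ind_H^G(L|_H)\cong L\otimes\rho^{G/H}$.

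For the ``in particular'' clause I would set $H=G^{(0)}$. Then $\rho^{G/G^{(0)}}=\rho^G$ is the left regular representation, while the restriction $L|_{G^{(0)}}$ is computed fiberwise: because $L$ is a homomorphism and every $u\in G^{(0)}$ is a unit, $L_u=\operatorname{id}_{\Hh(u)}$, so $L|_{G^{(0)}}$ depends only on the Hilbert bundle $\Hh$ and is precisely the representation $\dim L$ with $(\dim L)_u=\operatorname{id}_{\Hh(u)}$. Writing $(\dim L)\otimes\rho^G$ for the amplification $\Ind_{G^{(0)}}^G(\dim L)$ of the regular representation by this bundle, the general statement specializes to $(\dim L)\otimes\rho^G\cong L\otimes\rho^G$.

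I expect that the only real subtlety, rather than a genuine obstacle, is the bookkeeping: one must check that the equivalence $(L|_H)\otimes\iota\cong L|_H$ is compatible with the common quasi-invariant measure $\mu$ and the Radon--Nikodym cocycle $D$ used in the induction, which is automatic since tensoring with the trivial line bundle alters neither $\mu$, nor the decomposition $\{\mu_u\}$, nor $D$, and that the notation $(\dim L)\otimes\rho^G$ is read as $\Ind_{G^{(0)}}^G(L|_{G^{(0)}})$. One can optionally make this last identification transparent by exhibiting the Fell unitary $W$ acting fiberwise by $(W\zeta)(h)=L_{h^{-1}}\zeta(h)$, which intertwines $L\otimes\rho^G$ with $\Ind_{G^{(0)}}^G(\dim L)$, but this is not needed once the first assertion is established.
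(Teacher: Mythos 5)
Your proof of the main assertion is exactly the paper's: apply Proposition~\ref{E.2.5} with $M$ the trivial representation $\iota$ of $H$, identify $\Ind_H^G\iota$ with $\rho^{G/H}$, and absorb $\iota$ into $L|_H$. The only divergence is in the ``in particular'' clause: the paper makes $(\dim L)\otimes\rho^G$ meaningful as a genuine multiple of the regular representation by decomposing $L|_{G^{(0)}}$ into a direct sum of trivial representations on $G^{(0)}*\Cc$ with $\Cc(u)=\CC^{n(u)}$ and invoking the fact that induction commutes with direct sums, whereas you essentially \emph{define} $(\dim L)\otimes\rho^G$ to be $\Ind_{G^{(0)}}^G(L|_{G^{(0)}})$, which makes that clause true by fiat. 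Your observation that $L|_{G^{(0)}}=\dim L$ retains only the Hilbert bundle and forgets the action is the right substance, but the direct-sum decomposition is what justifies reading the result as an amplification of $\rho^G$ rather than as a restatement of the first part.
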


\begin{proof}
For the first part, we apply Proposition \ref{E.2.5} for $M$ the trivial representation of $H$. For the second part, $L|_{G^{(0)}}$ is a direct sum of trivial representations on  $G^{(0)}*\Cc$, where for $u\in G^{(0)}$, $\Cc(u)=\mathbb C^{n(u)}$ if $n(u)=\dim\Hh(u)$ is finite and $\Cc(u)$ is infinite dimensional otherwise.
\end{proof}

\bigskip

\section{Properties of the Koopman representation}

\bigskip

We still assume that the groupoid $G$ acts (on the left) on $X$ and let $G\ltimes X$ denote the
action groupoid. 

\begin{lem}\label{lem:normind}
Let $(\mu,X*\Hh,\hat{L})$ be a unitary
  representation of $G\ltimes X$. Then for all non-negative $f\in C_c(G)$ we have
  \[
    \Vert \Ind L(f)\Vert \le \Vert \kappa^\mu(f)\Vert.
  \]
\end{lem}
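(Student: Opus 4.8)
The plan is to realize both $\Ind L$ and $\kappa^\mu$ as integral operators on spaces of sections over $X$, and then to dominate the former pointwise by the latter. First I would introduce the unitary $V:L^2(\go*\Kk,\tm)\to L^2(X*\Hh,\mu)$, $V\xi(x)=\xi(\omega(x))(x)$, which is the $\Hh$-valued analogue of the identification $L^2(\go*\Kk,\tm)\cong L^2(X,\mu)$ made for $\kappa^\mu$ in Section \ref{sec:induc-repr-from}; it is unitary by the disintegration $d\mu=\int d\mu_u\,d\tm(u)$ together with $\Kk(u)=\int_{X_u}^\oplus\Hh(x)\,d\mu_u(x)$. Conjugating by $V$ and using $\Delta_{\mu}(g^{-1},x)=D(g^{-1},x)\Delta_{\tm}(g^{-1})$ exactly as in the computation preceding the Koopman formula, the induced representation acts by
\[
\Ind L(f)\zeta(x)=\int_{G^{\omega(x)}}f(g)\,\Delta_{\mu}(g^{-1},x)^{1/2}\,L_{(g,g^{-1}\cdot x)}\zeta(g^{-1}\cdot x)\,d\lambda^{\omega(x)}(g)
\]
for $\zeta\in L^2(X*\Hh,\mu)$, while $\kappa^\mu$ is given by the same formula with $L_{(g,g^{-1}\cdot x)}$ replaced by the identity, acting on scalar sections in $L^2(X,\mu)$.

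Next, given $\zeta\in L^2(X*\Hh,\mu)$, I would set $\psi(x):=\Vert\zeta(x)\Vert_{\Hh(x)}$. Since $x\mapsto\Vert\zeta(x)\Vert_{\Hh(x)}$ is Borel for a Borel section, $\psi$ is a non-negative element of $L^2(X,\mu)$ with $\Vert\psi\Vert_{L^2(X,\mu)}=\Vert\zeta\Vert_{L^2(X*\Hh,\mu)}$. The heart of the argument is the pointwise bound obtained, for $\mu$-a.e.\ $x$, from the triangle inequality for the $\Hh(x)$-valued integral, the fact that each fiber map $L_{(g,g^{-1}\cdot x)}$ is a Hilbert space isomorphism and hence isometric, and the non-negativity of $f$ and of $\Delta_{\mu}^{1/2}$:
\[
\Vert\Ind L(f)\zeta(x)\Vert_{\Hh(x)}\le\int_{G^{\omega(x)}}f(g)\,\Delta_{\mu}(g^{-1},x)^{1/2}\,\psi(g^{-1}\cdot x)\,d\lambda^{\omega(x)}(g)=\kappa^\mu(f)\psi(x),
\]
the right-hand side being non-negative because $f$, $\Delta_{\mu}^{1/2}$ and $\psi$ are.

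Finally I would square the pointwise estimate and integrate over $X$, obtaining $\Vert\Ind L(f)\zeta\Vert\le\Vert\kappa^\mu(f)\psi\Vert_{L^2(X,\mu)}\le\Vert\kappa^\mu(f)\Vert\,\Vert\psi\Vert_{L^2(X,\mu)}=\Vert\kappa^\mu(f)\Vert\,\Vert\zeta\Vert$; taking the supremum over unit vectors $\zeta\in L^2(X*\Hh,\mu)$ then yields $\Vert\Ind L(f)\Vert\le\Vert\kappa^\mu(f)\Vert$. The main obstacle I anticipate is purely technical: justifying the vector-valued triangle inequality and the measurability of $\psi$ within the Borel Hilbert bundle formalism. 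The conceptual point, by contrast, is clean---non-negativity of $f$ ensures the scalar kernel $f(g)\Delta_{\mu}(g^{-1},x)^{1/2}$ dominates, and the isometry of the fiber isomorphisms $L_{(g,g^{-1}\cdot x)}$ lets us pass to norms and thereby erase the representation $L$, leaving precisely the Koopman kernel.
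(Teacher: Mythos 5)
Your argument is correct and is essentially the paper's proof: the paper likewise passes from a section $\xi$ to the scalar function $\tilde{\xi}(x)=\Vert\xi(\omega(x))(x)\Vert$ (which preserves the $L^2$-norm), uses that each $L_{(g,g^{-1}\cdot x)}$ is isometric together with the non-negativity of $f$ and $\Delta_\mu^{1/2}$ to erase $L$, and identifies the dominating expression with the Koopman kernel. The only cosmetic difference is that the paper runs the estimate on matrix coefficients $\vert\langle \Ind L(f)\xi,\eta\rangle\vert\le\langle\kappa^\mu(f)\tilde{\xi},\tilde{\eta}\rangle$ via Cauchy--Schwarz in the fibers, whereas you prove the pointwise bound $\Vert\Ind L(f)\zeta(x)\Vert\le\kappa^\mu(f)\psi(x)$ and integrate, which sidesteps nothing essential but does require the vector-valued triangle inequality you flag.
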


\begin{proof}
   For $\xi\in L^2(G^{(0)}*\Kk)$ define $\tilde{\xi}(x)=\Vert \xi( \omega(x))(x)\Vert$.
  Then $\tilde{\xi}\in L^2(X,\mu)$ and
  $\Vert \tilde{\xi}\Vert=\Vert \xi\Vert$.
  
  Let $f\in C_c(G)$ be a non-negative function, and let $\xi,\eta\in
  L^2(G^{(0)}*\Kk)$. We have
  \begin{multline*}
    \left\vert \langle  \Ind L(f)\xi\;,\;\eta
      \rangle\right\vert=\\
    \left\vert
      \int_{G^{(0)}}\int_{G^u}\int_{X_u}f(g)\Delta_{\mu}(g^{-1},x)^{1/2}\bigl\langle
      L_{(g,g^{-1}\cdot x)}\xi(s(g))(g^{-1}\cdot x)\,,\,\eta(r(g))(x)
      \bigr\rangle\right.\\
   \,\,\left. d\mu_u(x)d\lambda^u(g)d\tm(u)\right\vert\\
    \le \int_{G^{(0)}}\int_{G^u}\int_{X_u}f(g)\Delta_{\mu}(g^{-1},x)^{1/2}\left\vert\bigl\langle
      L_{(g,g^{-1}\cdot x)}\xi(s(g))(g^{-1}\cdot x)\,,\,\eta(r(g))(x)
      \bigr\rangle\right\vert\\
    \,\, d\mu_u(x)d\lambda^u(g)d\tm(u)\\
  \end{multline*}
  which, since $L_{(g,g^{-1}\cdot x)}$ is a Hilbert
  space isomorphism,
  \begin{multline*}
    \le \int_{G^{(0)}}\int_{G^u}\int_{X_u}f(g)\Delta_{\mu}(g^{-1},x)^{1/2}
                        \Vert\xi(s(g))(g^{-1}\cdot x)\Vert\Vert\eta(r(g))(x)\Vert
                       \\
                      d\mu_u(x)d\lambda^u(g)d\tm(u)\\
    = \int_{G^{(0)}}\int_{G^u}\int_{X_u}f(g)\Delta_{\mu}(g^{-1},x)^{1/2}
                        \tilde{\xi}(g^{-1}\cdot x)\tilde{\eta}(x)                       
      d\mu_u(x)d\lambda^u(g)d\tm(u)\\
    =\langle  \kappa_\mu(f)\tilde{\xi}\;,\;\tilde{\eta} \rangle\le \Vert
      \kappa_\mu(f)\Vert\Vert \tilde{\xi}\Vert\Vert\tilde{\eta}\Vert
    =\Vert
      \kappa_\mu(f)\Vert\Vert \xi\Vert\Vert\eta\Vert
  \end{multline*}
 The result follows.
\end{proof}

\medskip

\begin{thm}\label{thm:weak_Koopman}
  With the notation as above, assume that $\mu$ is a
  $G$-quasi-invariant probability measure on $X$ with full support and let $\tm$
  be the push-forward quasi-invariant measure on $G^{(0)}$. Then, for all non-negative $f\in
  C_c(G)$ we have $\Vert \Ind \tm (f)\Vert \le \Vert
  \kappa^\mu(f)\Vert$. 
\end{thm}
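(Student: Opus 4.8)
The theorem claims that for the Koopman representation $\kappa^\mu$ (induced from the trivial rep of $G\ltimes X$) with $\mu$ having full support, the regular representation $\operatorname{Ind}\tilde\mu$ of $G$ satisfies $\|\operatorname{Ind}\tilde\mu(f)\| \le \|\kappa^\mu(f)\|$ for nonnegative $f$. Combined with the surjection $C^*_r(G)\to C^*(\kappa^\mu)$ mentioned in the abstract/introduction, this inequality is the reverse direction establishing that the reduced norm is controlled by the Koopman norm.

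**The plan.** The key observation is that $\operatorname{Ind}\tilde\mu$ is the regular representation $\rho$ of $G$ on $\tilde\mu$ (Example~\ref{ex:regular_rep}), which is itself a unitary representation of $G$, hence the integrated form of a genuine unitary groupoid representation. My strategy is to realize $\operatorname{Ind}\tilde\mu$ (up to unitary equivalence) as $\operatorname{Ind} L$ for a suitable unitary representation $L$ of the action groupoid $G\ltimes X$, so that Lemma~\ref{lem:normind} applies directly and yields $\|\operatorname{Ind}\tilde\mu(f)\|=\|\operatorname{Ind} L(f)\|\le\|\kappa^\mu(f)\|$. The natural candidate for $L$ is the regular representation of $G\ltimes X$ itself: inducing the regular representation of the action groupoid up to $G$ should recover, by a standard induction-in-stages argument, the regular representation of $G$ on $\tilde\mu$. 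This mirrors the classical fact (and the groupoid version in \cite{R14}) that inducing the regular representation of a subgroupoid gives the regular representation of the ambient groupoid when the trivial representation is induced through correspondences.

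**Carrying it out.** First I would take $L=\rho^{G\ltimes X}$, the regular representation of $G\ltimes X$ on $\mu$, acting on the Hilbert bundle $\{L^2((G\ltimes X)^x,\overline\lambda^x)\}_{x\in X}$. By Example~\ref{ex:regular_rep} applied to $G\ltimes X$, its integrated form is $\operatorname{Ind}\mu$ for the action groupoid, and since $\mu$ has full support this computes the reduced norm of $G\ltimes X$. Next I would compute $\operatorname{Ind} L$ using Definition~\ref{dfn:inducedrep}: the induced bundle has fibers $\Kk(u)=\int_{X_u}^\oplus L^2((G\ltimes X)^x,\overline\lambda^x)\,d\mu_u(x)$, and unwinding the formula for the Haar system $\overline\lambda$ from Remark~\ref{rm:haarsystem} together with the disintegration of $\mu$, I expect this to collapse to $L^2(G^{s(g)},\lambda^{s(g)})$-type fibers, reproducing the regular representation $\rho$ of $G$ and hence $\operatorname{Ind}\tilde\mu$ after the unitary $W\xi=\xi\Delta^{1/2}$. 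The cleanest route is probably an induction-in-stages statement: inducing from $G\ltimes X$ to $G$ the representation induced (trivially) from the unit space is the same as inducing from the unit space of $G$ directly, which is exactly $\rho^G=\operatorname{Ind}\tilde\mu$. Once the unitary equivalence $\operatorname{Ind} L\cong\operatorname{Ind}\tilde\mu$ is in hand, Lemma~\ref{lem:normind} gives $\|\operatorname{Ind}\tilde\mu(f)\|=\|\operatorname{Ind} L(f)\|\le\|\kappa^\mu(f)\|$ for nonnegative $f$, finishing the proof.

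**Main obstacle.** The delicate point will be verifying the induction-in-stages identification $\operatorname{Ind}_{G\ltimes X}^{G}\big(\rho^{G\ltimes X}\big)\cong \rho^G$ at the level of unitary groupoid representations, not merely formally. This requires matching the two Borel Hilbert bundles via an explicit unitary intertwiner and tracking the Radon--Nikodym factors $D(g^{-1},x)^{1/2}$ and $\Delta_{\tilde\mu}^{1/2}$, $\Delta_\mu^{1/2}$ so that the cocycle identity $\Delta_\mu(g,x)=D(g,x)\Delta_{\tilde\mu}(g)$ (established in the proof of Theorem~\ref{thm:quasi-invariant}) makes the densities cancel correctly. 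An alternative that sidesteps the stages argument is to invoke Example~\ref{ex:regular_rep} for $G\ltimes X$ to identify $\|\operatorname{Ind} L(f\circ\pi)\|$ with the reduced norm on the action groupoid and then relate $f\in C_c(G)$ to $C_c(G\ltimes X)$ through the covering $\pi:G\ltimes X\to G$; but this reintroduces essentially the same bookkeeping. Either way, I expect the measure-theoretic disintegration and the Fubini manipulations to be routine once the correct intertwining unitary is written down, so the real content is pinning down that unitary equivalence.
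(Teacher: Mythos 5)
Your overall architecture matches the paper's: choose a unitary representation $L$ of $G\ltimes X$ whose induction to $G$ dominates $\Ind\tm$ in norm, then invoke Lemma \ref{lem:normind}. Your candidate $L=\rho^{G\ltimes X}$ is even unitarily equivalent to the representation the paper uses (the paper takes $\Hh(x)=L^2(G^{\omega(x)},\lambda^{\omega(x)})$ with $L_{(g,x)}\xi(h)=\xi(g^{-1}h)$; the identification $(G\ltimes X)^x\ni(g,g^{-1}x)\mapsto g\in G^{\omega(x)}$ intertwines the two). The gap is in the step you yourself flagged as the main obstacle: the induction-in-stages identity $\Ind_{G\ltimes X}^{G}\bigl(\rho^{G\ltimes X}\bigr)\cong\rho^G$ is false, and the fibers do not collapse to $L^2(G^u,\lambda^u)$. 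One has $\Ll(u)=\int_{X_u}^{\oplus}L^2((G\ltimes X)^x,\ol^x)\,d\mu_u(x)\cong L^2(X_u,\mu_u)\otimes L^2(G^u,\lambda^u)$, and the induced action is $\kappa^\mu_g\otimes\rho^G_g$, so $\Ind\rho^{G\ltimes X}\cong\kappa^\mu\otimes\rho^G$. Whenever $\mu_u$ is not a point mass this is a proper (typically infinite) multiple of $\rho^G$, and even recognizing it as a multiple is not density bookkeeping: it requires the absorption principle of Corollary \ref{E.2.6}. So the intertwining unitary you propose to ``pin down'' does not exist, and as written the proof is incomplete.

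The gap is repairable, and the repaired argument is essentially how the paper proves the companion estimate inside Theorem \ref{thm:kweakrho}: by Corollary \ref{E.2.6}, $\kappa^\mu\otimes\rho^G\cong(\dim\kappa^\mu)\otimes\rho^G$, and an amplification of $\rho^G$ has the same integrated-form norm as $\rho^G$, whence $\Vert\Ind\tm(f)\Vert=\Vert\rho^G(f)\Vert=\Vert\Ind\rho^{G\ltimes X}(f)\Vert\le\Vert\kappa^\mu(f)\Vert$ for non-negative $f$ by Lemma \ref{lem:normind}. The paper's own proof of Theorem \ref{thm:weak_Koopman} instead avoids identifying $\Ind L$ altogether: it sends $\xi\in L^2(\go*L^2(\lambda),\tm)$ to the section $\tilde\xi(u)(x)(h):=\xi(u)(h)$, constant in the $X_u$-direction --- an isometry precisely because each $\mu_u$ is a probability measure, which is where the disintegration of $\mu$ enters --- and checks that $\la\rho^G(f)\xi,\eta\ra=\la\Ind L(f)\tilde\xi,\tilde\eta\ra$ using $\Delta_{\tm}(g^{-1})=\int_{X_{s(g)}}\Delta_\mu(g^{-1},x)\,d\mu_{s(g)}(x)$. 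Either route closes the argument; yours does not until the false equivalence is replaced by one of these two mechanisms.
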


\begin{proof}
  Recall from Example \ref{ex:regular_rep} that $\Ind \tm$ is
  unitarily equivalent with the integrated form of the unitary
  representation $\rho=(\tm,G^{(0)}*L^2(\lambda),\hat{\rho})$ of $G$, where
  $L^2(\lambda)=\{L^2(G^u,\lambda^u)\}_{u\in G^{(0)}}$, and, for $g\in
  G$, $\rho_g(\xi)(h)=\xi(g^{-1}h)$ for all $h\in G^{r(g)}$.

  Consider the unitary representation $L=(\mu,X*\Hh,\hat{L})$ of $G\ltimes X$, where
  $\Hh(x)=L^2(G^{\omega(x)},\lambda^{\omega(x)})$ for all $x\in X$,
  and, for $(g,x)\in G\ltimes X$, $L_{(g,x)}:\Hh(x)\to \Hh(g\cdot x)$ is given by
  \[
    L_{(g,x)}(\xi)(h)=\xi(g^{-1}h)\,\text{ for all }\xi\in
    \Hh(x)\text{ and }h\in G^{r(g)}.
  \]
  Let $f\in C_c(G)$ be a non-negative function. By Lemma
  \ref{lem:normind}, $\Vert \Ind L(f)\Vert \le \Vert
  \kappa_\mu(f)\Vert$. We prove next that $\Vert \rho(f)\Vert \le \Vert \Ind
  L(f)\Vert$. This implies the result.

  By definition, $\Ind L$ is given by $(\tm,G^{(0)}*\Kk,\Ind
  \hat{L})$, where \[\Kk(u)=\int_{X_u}^\oplus
  L^2(G^u,\lambda^u)\,d\mu_u(x).\] 
  While $\Kk(u)\simeq
  L^2(X_u,\mu_u)\otimes L^2(G^u,\lambda^u)$, we prefer to
  view elements of $\Kk$ as sections $\xi:X_u\to
  L^2(G^u,\lambda^u)$ endowed with the norm
  \[
    \Vert \xi\Vert^2=\int_{X_u}\int_{G^u}\vert \xi(x)(g)\vert^2\,d\lambda^u(g)d\mu_u(x).
  \]
  Then, for $g\in G$, $\xi \in \Kk(s(g))$, $x\in X_{r(g)}$
  and $h\in G^{r(g)}$,
  \begin{align*}
    \Ind L_g(\xi)(x)(h)&=D(g^{-1},x)^{1/2}L_{(g,g^{-1}\cdot
                         x)}(\xi(g^{-1}\cdot x))(h)\\
    &=D(g^{-1},x)^{1/2}\xi(g^{-1}\cdot x)(g^{-1}h),
  \end{align*}
  for all $\xi\in \Kk(s(g))$.
  Therefore, for $f\in C_c(G)$, $\Ind L(f)$ acts on $L^2(G^{(0)}*\Kk,\tm)$ via
  \begin{multline*}
    \la \Ind L(f)\xi\,,\,\eta
    \ra=\int_{G^{(0)}}\int_{G^u}f(g)\bigl\langle \Ind
            L_g\xi(s(g))\,,\,\eta(r(g))
            \bigr\rangle\Delta_{\tm}(g)^{-1/2}\,d\lambda^u(g)d\tm(u)\\
    =\int_{G^{(0)}}\int_{G^u}\int_{X_u}\int_{G^u}f(g)\xi(s(g))(g^{-1}\cdot
      x)(g^{-1}h)\overline{\eta(u)(x)(h)}\Delta_{\mu}(g^{-1},x)^{1/2}\\
 \,d\lambda^u(h)\,d\mu_u(x)\,d\lambda^u(g)d\tm(u),\\
  \end{multline*}
  for all $\xi,\eta\in L^2(G^{(0)}*\Kk,\tm)$.

  Let $f\in C_c(G)$ be a non-negative function and let $\xi,\eta\in
  L^2(G^{(0)}*L^2(\lambda),\tm)$. Then $\xi$ defines an element
  $\tilde{\xi}\in L^2(G^{(0)}*\Kk,\tm)$ via
  $\tilde{\xi}(u)(x)(h):=\xi(u)(h)$ and $\Vert \xi\Vert=\Vert
  \tilde{\xi}\Vert$ since $\mu_u$ is a probability 
  measure for all $u\in G^{(0)}$. 
  Similarly $\eta$ defines
  $\tilde{\eta}\in L^2(G^{(0)}*\Kk,\tm)$ such that $\Vert
  \eta\Vert=\Vert \tilde{\eta}\Vert$.  We have
  \begin{multline*}
    \left\vert \la \rho(f)\xi\,,\,\eta \ra\right\vert=\left\vert
                                                          \int_{G^{(0)}}\int_{G^u}f(g)\bigl\langle
                                                         \rho_g(\xi(s(g)))\,,\,\eta(u)
                                                          \bigr\rangle\Delta_{\tm}(g)^{-1/2}\,d\lambda^u(g)\,d\tm(u)\right\vert\\
    =\left\vert
      \int_{G^{(0)}}\int_{G^u}\int_{G^u}f(g)\xi(s(g))(g^{-1}h)\overline{\eta(u)(h)}\Delta_{\tm}(g)^{-1/2}
      d\lambda^u(h)d\lambda^u(g)d\tm(u)\right\vert
  \end{multline*}
  which, since
    $\ds \Delta_{\tm}(g^{-1})=\int_{X_u}\Delta_{\mu}(g^{-1},x)d\mu_{u}(x)$
    a.e. and $\Delta_{\tm}$ is a cocycle, 
    \begin{multline*}
   = \left\vert
      \int_{G^{0}}\int_{G^u}\int_{X_u}\int_{G^u}f(g)\tilde{\xi}(s(g))(g^{-1}\cdot
      x)(g^{-1}h)\overline{\tilde{\eta}(u)(x)(h)}\Delta_{\mu}(g^{-1},x)^{1/2}\right.\\
                                                       \left.  d\lambda^{u}(h)d\mu_u(x)d\lambda^u(g)d\tm(u)\right\vert\\
    =\left\vert \la \Ind L(f)\tilde{\xi}\,,\,\tilde{\eta}
      \ra\right\vert\le \Vert \Ind L(f)\Vert \Vert
      \tilde{\xi}\Vert\Vert\tilde{\eta}\Vert
    =\Vert \Ind L(f)\Vert \Vert\xi\Vert\Vert \eta\Vert.
  \end{multline*}
  It follows that $\Vert \rho(f)\Vert\le \Vert \Ind L(f)\Vert \le \Vert
  \kappa^\mu(f)\Vert$. 
\end{proof}

\begin{thm}\label{thm:kweakrho}
Assume that the action groupoid $(G\ltimes X,\overline{\lambda})$ is
$\sigma$-compact and amenable.  Assume also that  $\mu$ has full support. Then the Koopman
representation $\kappa^\mu$ is weakly contained in the left regular
representation $\rho$.  In particular,
 we have a surjection $C_r^*(G)\to C^*(\kappa^\mu)$.

\end{thm}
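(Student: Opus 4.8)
The plan is to derive $\kappa^\mu\prec\rho$ from the amenability of $G\ltimes X$ by inducing a weak containment that holds on the action groupoid. By definition $\kappa^\mu=\Ind(i_{G\ltimes X})$, the representation of $G$ induced from the trivial representation of $G\ltimes X$. Since $(G\ltimes X,\ol)$ is $\sigma$-compact and amenable, amenability is equivalent to weak containment of the trivial representation in the regular one, so $i_{G\ltimes X}\prec\rho^{G\ltimes X}$, where $\rho^{G\ltimes X}$ denotes the regular representation of the action groupoid. I would then push this weak containment forward along the induction functor and identify the result.

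The first task is to compute $\Ind(\rho^{G\ltimes X})$. Under the isomorphism $(G\ltimes X)^{x}\cong G^{\omega(x)}$ coming from the Haar system of Remark \ref{rm:haarsystem}, the regular representation $\rho^{G\ltimes X}$ is exactly the representation $L=(\mu,X*\Hh,\hat L)$ with $\Hh(x)=L^2(G^{\omega(x)},\lambda^{\omega(x)})$ and $L_{(g,x)}\xi(h)=\xi(g^{-1}h)$ introduced in the proof of Theorem \ref{thm:weak_Koopman}. The calculation carried out there shows that on $\Kk(u)=L^2(X_u,\mu_u)\otimes L^2(G^u,\lambda^u)$ one has $(\Ind L_g\,\xi)(x)(h)=D(g^{-1},x)^{1/2}\xi(g^{-1}\cdot x)(g^{-1}h)$, which is precisely the formula for $\kappa^\mu_g\otimes\rho_g$. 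Hence $\Ind(\rho^{G\ltimes X})\cong\kappa^\mu\otimes\rho$. Applying Corollary \ref{E.2.6} with $H=\go$ (the groupoid version of Fell absorption) gives $\kappa^\mu\otimes\rho\cong(\dim\kappa^\mu)\otimes\rho$, a direct sum of copies of $\rho$; since multiplicity does not affect the operator norm, $\Vert((\dim\kappa^\mu)\otimes\rho)(f)\Vert=\Vert\rho(f)\Vert$ for every $f\in C_c(G)$, so $\Ind(\rho^{G\ltimes X})$ is weakly equivalent to $\rho$.

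The crucial step is to induce the weak containment itself, concluding $\kappa^\mu=\Ind(i_{G\ltimes X})\prec\Ind(\rho^{G\ltimes X})$, which we have just seen is weakly equivalent to $\rho$. This is the groupoid analogue of Fell's continuity of induction with respect to weak containment: in Renault's correspondence picture of induction from \cite{R14}, applied to the correspondence $Z=G\ltimes X$ with the system of measures $\alpha_u=\lambda_u$, the induction functor should carry weakly contained representations to weakly contained representations. Concretely, the approximate invariant mean witnessing amenability yields a net of almost-invariant unit sections for $\rho^{G\ltimes X}$, and substituting these into the induction integral of Definition \ref{dfn:inducedrep} produces, for each $f\in C_c(G)$, vectors whose $\Ind(\rho^{G\ltimes X})$-matrix coefficients approximate those of $\kappa^\mu$, giving $\Vert\kappa^\mu(f)\Vert\le\Vert\Ind(\rho^{G\ltimes X})(f)\Vert$. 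I expect the main obstacle to be exactly this step: verifying that the induced almost-invariant sections are square-integrable over $\go*\Kk$ and that the approximation is uniform in $f$. This is where $\sigma$-compactness enters, both to secure the equivalence between amenability and weak containment and to control the approximating net.

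Putting these facts together gives $\Vert\kappa^\mu(f)\Vert\le\Vert\rho(f)\Vert$ for all $f\in C_c(G)$, which is precisely $\kappa^\mu\prec\rho$. Finally, by the general fact about weak containment recalled above, $\kappa^\mu\prec\rho$ produces a surjective homomorphism $C^*(\rho)\to C^*(\kappa^\mu)$; since $\mu$ has full support we have $\Vert f\Vert_r=\Vert\rho(f)\Vert$, so $C^*(\rho)=C_r^*(G)$ and the asserted surjection $C_r^*(G)\to C^*(\kappa^\mu)$ follows.
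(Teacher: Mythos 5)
Your proposal is correct and follows essentially the same route as the paper: the paper likewise splits the argument into (i) showing $\kappa^\mu\prec\Ind\rho^{G\ltimes X}$ by taking the approximately invariant functions $f_n\in C_c(G\ltimes X)$ furnished by amenability, forming the sections $\xi_n(u)(x)(g,g^{-1}x)=\xi(x)f_n(g,g^{-1}x)$ and checking that their matrix coefficients converge to those of $\kappa^\mu$, and (ii) identifying $\Ind\rho^{G\ltimes X}\cong\kappa^\mu\otimes\rho^G\cong(\dim\kappa^\mu)\otimes\rho^G$ via Corollary \ref{E.2.6}. The step you flag as the main obstacle is exactly the content of the paper's first proposition, carried out with the concrete construction you describe.
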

We will write in the following $\rho^G$ for the left regular
representation on $\tm$ of $G$ and $\rho^{G\ltimes X}$ for the left
regular representation on $\mu$ of $G\ltimes X$. We break the proof of
the theorem into two parts. First we prove that 
the Koopman representation $\kappa$ is weakly contained into
the induced representation $\Ind \rho^{G\ltimes X}$. In the second part we prove that
$\Vert \Ind \rho^{G\ltimes X}(f)\Vert\le \Vert \rho^G(f)\Vert$ for all
$f\in C_c(G)$. This implies the result.

Using \cite[Proposition 2.2.7]{AR} (see the discussion following
Definition 2.6 of \cite{R15} for the equivalence between the various
definitions of amenability in the $\sigma$-compact case), there is a
sequence of functions $\{f_n\}\in C_c(G\ltimes X)$ such that the
following conditions hold:
\begin{equation}
  \label{eq:amen1}
  \int_{G^{\omega(x)}}\vert f_n(g,g^{-1}x)\vert^2
  \,d\lambda^{\omega(x)}(g)=1\,\,\text{ for all }\,x\in X
\end{equation}
and
\begin{equation}
  \label{eq:amen2}
  \lim_{n\to\infty}\int_{G^{r(h)}}\vert
 f_n(h^{-1}g,g^{-1}h
  x)-f_n(g,g^{-1}h x)\vert^2\,d\lambda^{r(h)}(g)=0
\end{equation}
uniformly on compact subsets of $G\ltimes X$.

\begin{prop}
  Assume the hypotheses of Theorem \ref{thm:kweakrho}. Then the
  Koopman representation is weakly contained in $\Ind \rho^{G\ltimes
    X}$. Therefore, $\Vert \kappa^\mu(f)\Vert\le \Vert \Ind \rho^{G\ltimes
  X}(f)\Vert$ for all $f\in C_c(G)$.
\end{prop}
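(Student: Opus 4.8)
The plan is to exploit that the Koopman representation is, by definition, the representation induced from the trivial representation $i_{G\ltimes X}$ of the action groupoid, together with amenability of $G\ltimes X$, which is encoded by the approximately invariant functions $\{f_n\}\subset C_c(G\ltimes X)$ satisfying \eqref{eq:amen1} and \eqref{eq:amen2}. Since amenability means precisely that $i_{G\ltimes X}$ is weakly contained in $\rho^{G\ltimes X}$, one expects this weak containment to be preserved under induction. Rather than invoking an abstract continuity-of-induction result, I would establish the weak containment $\kappa^\mu\prec \Ind\rho^{G\ltimes X}$ directly, using the $f_n$ to manufacture vectors in the space of $\Ind\rho^{G\ltimes X}$ whose matrix coefficients approximate those of $\kappa^\mu$.

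Concretely, observe that the representation $L=(\mu,X*\Hh,\hat{L})$ of $G\ltimes X$ used in the proof of Theorem \ref{thm:weak_Koopman}, with $\Hh(x)=L^2(G^{\omega(x)},\lambda^{\omega(x)})$, is exactly the regular representation $\rho^{G\ltimes X}$; hence its induced representation is $\Ind\rho^{G\ltimes X}$, and an element of $L^2(G^{(0)}*\Kk,\tm)$ may be written as a section $\xi(u)(x)(h)$ with $x\in X_u$ and $h\in G^u$. Given $\xi\in L^2(X,\mu)$ I would set
\[
  \Xi_n(u)(x)(h):=\xi(x)\,f_n(h,h^{-1}\cdot x),
\]
and define $H_n$ similarly from $\eta\in L^2(X,\mu)$. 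The normalization \eqref{eq:amen1} gives $\int_{G^u}|f_n(h,h^{-1}\cdot x)|^2\,d\lambda^u(h)=1$ for every $x\in X_u$, whence $\Vert \Xi_n\Vert=\Vert \xi\Vert$ and $\Vert H_n\Vert=\Vert\eta\Vert$; in particular these families are uniformly bounded.

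Next I would substitute $\Xi_n$ and $H_n$ into the matrix-coefficient formula for $\Ind\rho^{G\ltimes X}(f)$ obtained in the proof of Theorem \ref{thm:weak_Koopman}. Since $f_n$ is real-valued, a Fubini rearrangement shows that $\la \Ind\rho^{G\ltimes X}(f)\Xi_n,H_n\ra$ equals the Koopman matrix coefficient $\la\kappa^\mu(f)\xi,\eta\ra$ with the single modification that the integrand is multiplied by the scalar
\[
  c_n(g,x):=\int_{G^u}f_n(g^{-1}h,h^{-1}\cdot x)\,f_n(h,h^{-1}\cdot x)\,d\lambda^u(h),\qquad u=\omega(x).
\]
Two applications of \eqref{eq:amen1}, the second after the left-invariant change of variable $h\mapsto gh$, bound both $L^2$-factors by $1$, so Cauchy--Schwarz gives $|c_n(g,x)|\le 1$. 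Moreover, applying \eqref{eq:amen2} with the fixed element $(g,g^{-1}\cdot x)\in G\ltimes X$ yields $\int_{G^u}|f_n(g^{-1}h,h^{-1}\cdot x)-f_n(h,h^{-1}\cdot x)|^2\,d\lambda^u(h)\to 0$ uniformly on compacta, and combining this with \eqref{eq:amen1} and Cauchy--Schwarz shows $c_n(g,x)\to 1$ pointwise.

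Finally, since $f\in C_c(G)$ and $\xi,\eta\in L^2(X,\mu)$, the integrand without the factor $c_n$ is absolutely integrable, being dominated by the integrand defining $\la\kappa^\mu(|f|)\,|\xi|,|\eta|\ra$, which is finite because $\kappa^\mu$ is $I$-norm bounded; so the uniform bound $|c_n|\le 1$ together with the pointwise convergence $c_n\to 1$ lets the dominated convergence theorem pass the limit through the integral. Hence $\la \Ind\rho^{G\ltimes X}(f)\Xi_n,H_n\ra\to\la\kappa^\mu(f)\xi,\eta\ra$ for every $f\in C_c(G)$. Taking $\eta=\xi$ exhibits each vector state of $\kappa^\mu$ as a weak-$*$ limit of vector states of $\Ind\rho^{G\ltimes X}$, which is precisely the weak containment $\kappa^\mu\prec\Ind\rho^{G\ltimes X}$; the bound $\Vert\kappa^\mu(f)\Vert\le\Vert\Ind\rho^{G\ltimes X}(f)\Vert$ then follows by taking the supremum over unit vectors $\xi,\eta$. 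I expect the main obstacle to be the analytic bookkeeping in this last step: verifying that, as $(g,x)$ ranges over the support of the Koopman integrand, the elements $(g,g^{-1}\cdot x)$ remain in a fixed compact subset of $G\ltimes X$ so that \eqref{eq:amen2} applies uniformly, and carefully assembling the uniform domination needed to interchange limit and integral.
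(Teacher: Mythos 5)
Your proposal is correct and follows essentially the same route as the paper: the same test vectors $\Xi_n(u)(x)(h)=\xi(x)f_n(h,h^{-1}\cdot x)$, the same use of \eqref{eq:amen1} to get $\Vert\Xi_n\Vert=\Vert\xi\Vert$, and the same identification of the correction factor $c_n$ whose convergence to $1$ follows from \eqref{eq:amen2}. The only cosmetic differences are that the paper works with vector states ($\eta=\xi$) from the start and establishes $c_n\to 1$ via the identity $2\operatorname{Re}\langle a,b\rangle=\Vert a\Vert^2+\Vert b\Vert^2-\Vert a-b\Vert^2$ rather than your Cauchy--Schwarz argument, and your explicit appeal to dominated convergence (with the bound $|c_n|\le 1$) to pass the limit through the integral is a legitimate, if slightly more careful, version of the paper's final step.
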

\begin{proof}
  The left regular representation $\rho^{G\ltimes X}$ on $\mu$ of
  $G\ltimes X$ is the unitary representation $(\mu,X*L^2(G\ltimes
  X,\overline{\lambda}),\hat{\rho}^{G\ltimes X})$, where the fiber
  over $x\in X$ of the Hilbert bundle is $L^2((G\ltimes
  X)^x,\overline{\lambda}^x)$. We will write $\langle\cdot \,,\, \cdot\rangle_x$
  for the inner product in the fiber over $x$. It is useful to keep in mind that
  \[
    (G\ltimes X)^x=\{(g,g^{-1}x)\,:\,g\in G^{\omega(x)}\}.
  \]
  If $(h,x)\in G\ltimes X$ then $\rho^{G\ltimes
    X}_{(h,x)}:L^2((G\ltimes X)^x,\overline{\lambda}^x)\to
  L^2((G\ltimes X)^{h x},\overline{\lambda}^{h x})$ is given
  by
  \[
    \rho^{G\ltimes X}_{(h,x)}\xi(g,g^{-1}h
    x)=\xi(h^{-1}g,g^{-1}h x).
  \]
  Therefore if $\xi\in L^2((G\ltimes X)^x,\overline{\lambda}^x)$ and
  $\eta\in L^2((G\ltimes X)^{h x},\overline{\lambda}^{h
    x})$,
  \[
    \langle \rho^{G\ltimes X}_{(h,x)}\xi,\eta
    \rangle_x=\int_{G^{r(h)}}\xi(h^{-1}g,g^{-1}h
    x)\overline{\eta(g,g^{-1}h x)}\,d\lambda^{r(h)}(g).
  \]

  Then the induced representation $\Ind \rho^{G\ltimes X}$ is the
  unitary representation $(\tm,\go*\Ll,\Ind \hat{\rho}^{G\ltimes X})$
  of $G$, where \[\Ll(u)=\int_{X_u}^\oplus L^2((G\ltimes
  X)^x,\overline{\lambda}^x)\,d\mu_u(x)\cong L^2(X_u,\mu_u)\otimes L^2(G^u,\lambda^u)\cong\Kk(u).\] 
  Thus, if $\xi,\eta\in
  \Ll(u)$
  \begin{align*}
  \langle \xi\,,\,\eta \rangle_u&=\int_{X_u}\langle
                                 \xi(x),\eta(x) \rangle_x\,d\mu_u(x)\\
    &=\int_{X_u}\int_{G^u}\xi(x)(g,g^{-1}x)\overline{\eta(x)(g,g^{-1}x)}\,d\lambda^u(g)\,d\mu_u(x).
  \end{align*}
  If $h\in G$, $\Ind \rho^{G\ltimes X}_{h}:\Ll(s(h))\to
  \Ll(r(h))$ is given via
  \[
    (\Ind \rho^{G\ltimes X}_{h}\xi)(x)(g,g^{-1}x)=D(h^{-1},x)^{1/2}\xi(h^{-1}x)(h^{-1}g,g^{-1}x)
  \]
  for all $\xi\in\Ll(s(h))$, $x\in X_{r(h)}$ and
  $g\in G^{r(h)}$. Therefore, if $\xi\in \Ll(s(h))$ and
  $\eta\in\Ll(r(h))$ we have
  \begin{multline*}
    \langle \Ind\rho^{G\ltimes X}_{h}\xi\,,\,\eta
    \rangle_{r(h)}\\
    =\int_{X_{r(h)}}\int_{G^{r(h)}}D(h^{-1},x)^{1/2}\xi(h^{-1}x)(h^{-1}g,g^{-1}x)\overline{\eta(x)(g,g^{-1}x)}\,d\lambda^{r(h)}(g)d\mu_{r(h)}(x).
  \end{multline*}
  The integrated form of $\Ind\rho^{G\ltimes X}$ of $C_c(G)$ acts on
  $L^2(\go*\Ll,\tm)$ via
  \begin{multline*}
    \langle \Ind \rho^{G\ltimes X}(f)\xi\,,\,\eta \rangle\\
    =\int_{\go}\int_{G^u}f(h)\int_{X_u}\int_{G^u}D(h^{-1},x)^{1/2}\xi(s(h))(h^{-1}x)(h^{-1}g,g^{-1}x)\\
    \cdot
    \overline{\eta(u)(x)(g,g^{-1}x)}\,d\lambda^u(g)d\mu_u(x)\Delta_{\tm}(g)^{-1/2}\,d\lambda^u(h)d\tm(u) 
  \end{multline*}
  for all $f\in C_c(G)$ and $\xi,\eta\in L^2(\go*\Ll,\tm)$.

  Let $\kappa^\mu$ be the Koopman representation acting on $L^2(X,\mu)$
  and let $\xi\in L^2(X,\mu)$. For $n\in \NN$ define $\xi_n\in
  L^2(\go*\Ll,\tm)$ via
  \[
    \xi_n(u)(x)(g,g^{-1}x)=\xi(x)f_n(g,g^{-1}x).
  \]
  We check that indeed $\xi_n\in L^2(\go*\Ll,\tm)$ for all $n\in\NN$
  and $\Vert \xi_n\Vert=\Vert \xi\Vert$:
  \begin{multline*}
    \Vert \xi_n\Vert^2=\int_{\go}\Vert \xi(u)\Vert_{u}^2\,d\tm(u)\\
    =\int_{\go}\int_{X_u}\int_{G^u}\vert
    \xi_n(u)(x)(g,g^{-1}x)\vert^2\,d\lambda^u(g)d\mu_u(x)d\tm(u)\\
    =\int_{\go}\int_{X_u}\vert \xi(x)\vert^2\left(
      \int_{G^u}\vert
     f_n(g,g^{-1}x)\vert^2\,d\lambda^u(g)
    \right)d\mu_u(x)d\tm(x)\\
    =\int_{\go}\int_{X_u}\vert
    \xi(x)\vert^2\,d\mu_u(x)d\tm(u)=\Vert \xi\Vert^2.
  \end{multline*}
  We used \eqref{eq:amen1} in the second to last equality.

  Next we prove that $\ds \lim_{n\to\infty}\langle  \Ind \rho^{G\ltimes
    X}(f)\xi_n\,,\,\xi_n \rangle=\langle  \kappa^\mu(f)\xi,\xi \rangle$ for
  all $f\in C_c(G)$. This implies the weak containment of $\kappa^\mu$ in
  $\Ind \rho^{G\ltimes X}$.  We have
  \begin{multline*}
    \langle \Ind \rho^{G\ltimes X}(f)\xi_n\,,\,\xi_n
    \rangle=\int_{\go}\int_{G^u}f(h)\int_{X_u}\xi(h^{-1}
    x)\overline{\xi(x)}D(h^{-1},x)\\
    \cdot \left(
      \int_{G^u}f_n(h^{-1}g,g^{-1}x)\overline{f_n(g,g^{-1}x)}
    \,d\lambda^u(g)\right)d\mu_u(x)\Delta_{\tm}(h)^{-1/2}d\lambda^u(h)d\tm(u)
\end{multline*}
Equation \ref{eq:amen2} implies that (see the proof of
\cite[Proposition 2.2.7]{AR})
\begin{multline*}
 2 \lim_{n\to
   \infty}\int_{G^u}f_n(h^{-1}g,g^{-1}x)\overline{f_n(g,g^{-1}x)}d\lambda^u(g)=\\
 \lim_{n\to
 \infty}\left( \int_{G^u}\vert
 f_n(h^{-1}g,g^{-1}x)\vert^2d\lambda^u(h)+\int_{G^u}\vert
f_n(g,g^{-1}x)\vert^2d\lambda^u(g) \right)=2
\end{multline*}
uniformly on compact subsets of $G\ltimes X$. Therefore
  \begin{multline*}
    \lim_{n\to \infty}\langle \Ind \rho^{G\ltimes X}(f)\xi_n\,,\,\xi_n
    \rangle\\
    =\int_{\go}\int_{G^u}f(h)\int_{X_u}\xi(h^{-1}
    x)\overline{\xi(x)}D(h^{-1},x)d\mu_u(x)\Delta_{\tm}(h)^{-1/2}d\lambda^u(h)d\tm(u)
    \\
    =\langle \kappa^\mu(f)\xi\,,\,\xi \rangle.
  \end{multline*}
\end{proof}

\begin{prop}
  Under the hypotheses of Theorem \ref{thm:kweakrho}, $\Vert \Ind
  \rho^{G\ltimes X}(f)\Vert \le \Vert \rho^G(f)\Vert$ for all $f\in C_c(G)$.
\end{prop}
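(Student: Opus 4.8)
The plan is to recognize the induced representation $\Ind\rho^{G\ltimes X}$ as the tensor product $\kappa^\mu\otimes\rho^G$ and then to invoke the Fell-type absorption recorded in Corollary \ref{E.2.6}. Both $\kappa^\mu$ and $\rho^G$ are unitary representations of $G$ carrying the same quasi-invariant measure $\tm$, so by Remark \ref{rem:equiv_rep} the tensor product $\kappa^\mu\otimes\rho^G$ is a well-defined unitary representation of $G$ whose fiber over $u\in\go$ is $L^2(X_u,\mu_u)\otimes L^2(G^u,\lambda^u)$. This is exactly the fiber $\Ll(u)$ of $\Ind\rho^{G\ltimes X}$ computed above.

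First I would fix, for each $x\in X$, the unitary $L^2((G\ltimes X)^x,\ol^x)\to L^2(G^{\omega(x)},\lambda^{\omega(x)})$ coming from the parametrization $(G\ltimes X)^x=\{(g,g^{-1}\cdot x):g\in G^{\omega(x)}\}$ together with the definition of $\ol$ in Remark \ref{rm:haarsystem}. Assembling these over $x\in X_u$ and integrating against $\mu_u$ produces a Borel Hilbert bundle isomorphism $\Ll\to\Kk\otimes L^2(\lambda)$ sending a section $\xi$ (with $\xi(x)\in L^2((G\ltimes X)^x,\ol^x)$) to the function $\Phi(x,g):=\xi(x)(g,g^{-1}\cdot x)$. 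The key step is to check that this isomorphism intertwines the two representations. Comparing the displayed formula for $\Ind\rho^{G\ltimes X}_h$ with $(\kappa_h\otimes\rho^G_h)\Phi(x,g)=D(h^{-1},x)^{1/2}\Phi(h^{-1}\cdot x,h^{-1}g)$, one sees that both equal $D(h^{-1},x)^{1/2}\Phi(h^{-1}\cdot x,h^{-1}g)$; since the bundle map is fiberwise unitary and $G$-equivariant, it implements an equivalence $\Ind\rho^{G\ltimes X}\cong\kappa^\mu\otimes\rho^G$.

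With this identification in hand, Corollary \ref{E.2.6} applied with $H=\go$ yields $\kappa^\mu\otimes\rho^G\cong(\dim\kappa^\mu)\otimes\rho^G$. By the Example preceding Proposition \ref{E.2.5} (which identifies $\Ind_{\go}^G\iota$ with $\rho^G$) and the fact that induction commutes with direct sums, the right-hand side is a direct sum of copies of $\rho^G$ with fiberwise multiplicity $\dim\Kk(u)$; hence its integrated form satisfies $\Vert((\dim\kappa^\mu)\otimes\rho^G)(f)\Vert=\Vert\rho^G(f)\Vert$ for every $f\in C_c(G)$, because a direct sum of copies of a single representation has the norm of that representation. Passing to integrated forms therefore gives $\Vert\Ind\rho^{G\ltimes X}(f)\Vert=\Vert\rho^G(f)\Vert$, which is even stronger than the claimed inequality (and, notably, uses neither amenability nor $\sigma$-compactness).

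The main obstacle is the bookkeeping in the intertwining step: one must verify that the fiberwise unitaries $L^2((G\ltimes X)^x,\ol^x)\cong L^2(G^{\omega(x)},\lambda^{\omega(x)})$ genuinely assemble into a Borel bundle isomorphism, and that the Radon--Nikodym factor $D(h^{-1},x)^{1/2}$ together with the modular functions $\Delta_{\tm}$ and $\Delta_\mu$ match exactly on both sides, via the relation $\Delta_\mu(h^{-1},x)=D(h^{-1},x)\Delta_{\tm}(h^{-1})$ established in Theorem \ref{thm:quasi-invariant}. Once the equivalence is confirmed at the level of unitary representations of $G$, the passage to $C_c(G)$ and the amplification norm identity are routine.
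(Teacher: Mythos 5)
Your proposal is correct and follows essentially the same route as the paper: the authors also identify $\Ind\rho^{G\ltimes X}$ with $\kappa^\mu\otimes\rho^G$ via the fiberwise unitary $V(u)(\xi)(x)(g)=\xi(x)(g,g^{-1}x)$ coming from the parametrization $(G\ltimes X)^x=\{(g,g^{-1}\cdot x):g\in G^{\omega(x)}\}$, and then apply Corollary \ref{E.2.6} to obtain $(\dim\kappa^\mu)\otimes\rho^G$ and conclude the norm (in)equality.
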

\begin{proof}
 Note that the representation $\Ind \rho^{G\ltimes
   X}$ is equivalent  with  $\Ind L$ of Theorem
 \ref{thm:weak_Koopman}. Indeed, if $u\in \go$,  $V(u):\Ll(u)\to
 \Kk(u)$ defined via
 \[V(u)(\xi)(x)(g)=\xi(x)(g,g^{-1}x)\text{ where }r(g)=\omega(x)=u\] is a
 unitary. Moreover $V$  intertwines $\Ind \rho^{G\ltimes X}$
 and $\Ind L$ in the sense of Remark \ref{rem:equiv_rep}. 

  Therefore $\Ind\rho^{G\ltimes X}\cong \kappa^\mu\otimes \rho^G$ and by
  Corollary \ref{E.2.6} we have $\kappa^\mu\otimes \rho^G$ equivalent to
  $(\dim\kappa_\mu)\otimes\rho^G$.  It follows that for all $f\in C_c(G)$ we have
  \[\|\kappa^\mu(f)\|\le \|(\kappa^\mu\otimes \rho^G)(f)\|=\|((\dim \kappa^\mu)\otimes \rho^G)(f)\|=\|\rho^G(f)\|.\]
\end{proof}

\bigskip

\section{The Renault-Deaconu groupoid}

\bigskip

Let $X$ be a locally compact Hausdorff space and let $T:X\to X$ be a local homeomorphism. Then the Renault-Deaconu groupoid $G(X,T)$ associated to $T$ was described in Example \ref{RD}.
\begin{rmk}
A probability measure $\mu$ on $X=G^{(0)}$ defines a state $\phi_\mu$ on $C^*(G(X,T))$ such that $\ds \phi_\mu(f)=\int_{G^{(0)}}f|_{G^{(0)}}d\mu$ for $f\in C_c(G(X,T))$. It is known that $\phi_\mu$ is a KMS state for the $\RR$-action given by $\alpha_t(f)(\gamma)=e^{itc(\gamma)}f(\gamma)$ at inverse temperature $\beta$ iff $\mu$ is quasi-invariant for $G(X,T)$ with Radon-Nikodym derivative $D_\mu=e^{-\beta c}$, see \cite{KR}. Here $c:G(X,T)\to \ZZ,\; c(x,k,y)=k$.
\end{rmk}

If $\psi:X\to (0,\infty)$ is continuous, then there is a continuous cocycle $D_\psi:G(X,T)\to (0,\infty)$ given by
\[D_\psi(x,m-n,y)=\frac{\psi(x)\psi(Tx)\cdots \psi(T^{m-1}x)}{\psi(y)\psi(Ty)\cdots\psi(T^{n-1}y)}.\]

The transfer operator $\Ll_\psi:C(X)\to C(X)$ is given by
\begin{equation}\label{eq:trop}(\Ll_\psi f)(x)=\sum_{Ty=x}\psi(y)f(y).
\end{equation}
We recall the following result, see \cite{KR} and Proposition 3.4.1 in \cite{ R}.

\begin{prop}If $\mu$ is a probability measure on $X$, then $\mu$ is quasi-invariant for $G(X,T)$ with Radon-Nikodym derivative $\ds D_\mu=\frac{dr^*\mu}{ds^*\mu}$ if and only if  $\Ll_\psi^*\mu=\mu$, where $\Ll^*_\psi$ is the dual operator acting on the space of finite measures on $X$.
\end{prop}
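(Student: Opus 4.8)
The plan is to translate ``$\mu$ is quasi-invariant with $D_\mu=D_\psi$'' into a single integral identity over $G=G(X,T)$ and then to match it, one bisection at a time, with the eigenmeasure equation $\Ll_\psi^*\mu=\mu$. Since $G$ is \'etale with the counting Haar system, the induced measure is $\nu=\mu\circ\lambda=r^*\mu$, with $\int_G F\,d\nu=\int_X\sum_{g\in G^x}F(g)\,d\mu(x)$, and its inverse is $\nu^{-1}=s^*\mu$; thus the derivative written in the statement is literally $D_\mu=d\nu/d\nu^{-1}$, and the content of the proposition is that this equals the cocycle $D_\psi$. Because $D_\psi$ is continuous and strictly positive, requiring that $\mu$ be quasi-invariant with derivative $D_\psi$ is equivalent to the identity
\begin{equation*}
  \int_X\sum_{g\in G^x}F(g)\,d\mu(x)=\int_X\sum_{g\in G^x}F(g^{-1})D_\psi(g^{-1})\,d\mu(x)\tag{$\dagger$}
\end{equation*}
for all $F\in C_c(G)$, positivity of $D_\psi$ upgrading $\nu\ll\nu^{-1}$ to $\nu\sim\nu^{-1}$. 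So it suffices to prove $(\dagger)\Leftrightarrow\Ll_\psi^*\mu=\mu$.

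I would first record the bookkeeping. Write $\psi_m(y)=\prod_{j=0}^{m-1}\psi(T^jy)$ (with $\psi_0\equiv1$), so that $D_\psi(x,m-n,y)=\psi_m(x)/\psi_n(y)$, and note the semigroup identity $\Ll_\psi^m=\Ll_{\psi_m}$, an easy induction from \eqref{eq:trop}. Iterating $\Ll_\psi^*\mu=\mu$ then gives $(\Ll_{\psi_m})^*\mu=\mu$ for every $m$, i.e.
\begin{equation*}
  \int_X\sum_{T^my=z}\psi_m(y)f(y)\,d\mu(z)=\int_Xf\,d\mu\qquad\text{for all }f\in C_c(X).\tag{$**_m$}
\end{equation*}
Both sides of $(\dagger)$ reduce to basic bisections by a partition of unity: covering $\operatorname{supp}F$ by finitely many basic bisections $Z(U,m,n,W)$ and splitting $F$ accordingly, it is enough to check $(\dagger)$ when $F$ is supported on one $Z(U,m,n,W)$, where $T^m|_U$, $T^n|_W$ are injective and $T^m(U)=T^n(W)=:E$.

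For such an $F$, parametrize $g=(u,m-n,w)\in Z$ by $z\in E$ through $u=(T^m|_U)^{-1}(z)$, $w=(T^n|_W)^{-1}(z)$, and set $\tilde F(u):=F(g)$. Counting $r$-fibres shows that the left-hand side of $(\dagger)$ equals $\int_U\tilde F\,d\mu$, while (using $r(g^{-1})=w$ and $D_\psi(g^{-1})=\psi_m(u)/\psi_n(w)$) the right-hand side equals $\int_W\tilde F(u(w))\,\psi_m(u(w))/\psi_n(w)\,d\mu(w)$. Applying $(**_m)$ to $f=\tilde F\,\mathbf 1_U$ rewrites the first integral as $\int_E\psi_m(u(z))\tilde F(u(z))\,d\mu(z)$; applying $(**_n)$ to the integrand of the second, the factors $\psi_n(w)$ cancel and it becomes the \emph{same} expression, using that the unique preimages satisfy $u(w(z))=u(z)$. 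Hence the two sides agree and $\Ll_\psi^*\mu=\mu\Rightarrow(\dagger)$. For the converse, specialize $(\dagger)$ to $F$ supported on a degree-one bisection $\{(y,1,Ty):y\in U\}$ with $T|_U$ injective, where $D_\psi\equiv\psi$: this is precisely $\int_Uh\,d\mu=\int_{T(U)}h((T|_U)^{-1}x)\,\psi((T|_U)^{-1}x)\,d\mu(x)$, and summing over a partition of unity of an arbitrary $f\in C_c(X)$ yields $\int_X\Ll_\psi f\,d\mu=\int_Xf\,d\mu$, i.e. $\Ll_\psi^*\mu=\mu$.

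The main obstacle is the forward implication $\Ll_\psi^*\mu=\mu\Rightarrow(\dagger)$ on a bisection of general degree $m-n$: one must push an element that winds up $m$ steps and down $n$ steps through the single-step generators, which is exactly what the semigroup identity $\Ll_\psi^m=\Ll_{\psi_m}$ accomplishes, the injectivity of $T^m|_U$ and $T^n|_W$ turning the transfer-operator sums into genuine changes of variables onto the common image $E$. The only other point requiring care is keeping the Radon--Nikodym conventions consistent throughout, in particular which of $\nu,\nu^{-1}$ carries $D_\psi$ and the inversion rule $D_\psi(g^{-1})=D_\psi(g)^{-1}$.
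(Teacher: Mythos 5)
The paper does not actually prove this proposition --- it is recalled from \cite{KR} and \cite[Proposition 3.4.1]{R} --- so there is no internal proof to compare against; I am therefore assessing your argument on its own. It is correct, granting the (clearly intended) reading that the displayed derivative $dr^*\mu/ds^*\mu$ is being asserted to equal the cocycle $D_\psi$ constructed from $\psi$ in the preceding paragraph; as literally printed the statement is tautological, and you resolve this the only sensible way. Your reduction of quasi-invariance to the single identity $(\dagger)$, the bookkeeping $D_\psi(x,m-n,y)=\psi_m(x)/\psi_n(y)$ and $\Ll_\psi^m=\Ll_{\psi_m}$, and the bisection-by-bisection verification all check out: on $Z(U,m,n,W)$ both sides of $(\dagger)$ collapse, via $(**_m)$ and $(**_n)$ and the compatibility $u(w(z))=u(z)$, to $\int_E\psi_m(u(z))\tilde F(u(z))\,d\mu(z)$, and the converse correctly recovers $\int_X \Ll_\psi f\,d\mu=\int_X f\,d\mu$ from the degree-one bisections. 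The only real difference from the proofs in the cited sources is organizational: there one typically verifies the Radon--Nikodym identity only on the generating bisections $\{(y,1,Ty):y\in U\}$, where it is verbatim the eigenmeasure equation, and then propagates it to all of $G(X,T)$ using that $D_\psi$ is a continuous strict cocycle and every arrow is a product of generators and their inverses; your semigroup identity $\Ll_\psi^m=\Ll_{\psi_m}$ performs that propagation by hand on each $Z(U,m,n,W)$, which is more explicit and avoids invoking a.e.\ cocycle identities. One cosmetic slip: in evaluating the right-hand side of $(\dagger)$ you write $r(g^{-1})=w$ where you mean $s(g^{-1})=w$ (equivalently $r(g)=w$ for the summation variable $g\in G^x$); the resulting integral over $W$ is nonetheless the correct one.
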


\begin{example}
Assume that $(X,d)$ is a metric space and $T:X\to X$ is a local
homeomorphism such that $\ds \lim_{y\to
  x}\frac{d(Tx,Ty)}{d(x,y)}=\varphi(x)>0$ for all $x\in X$. Let
$\psi(x)=\varphi(x)^{-s}$, where  $s$ is the Hausdorff dimension of
$(X,d)$, Then the Hausdorff measure $\mu$ of $d$ is quasi-invariant
for $G(X,T)$
(\cite{IK13}). 

In particular, for $0<r_j<1, j=1,...,k$ and $X=\{1,2...,k\}^\NN$ with
metric $d$ such that diam$(Z(x_0x_1\cdots x_n))=r_{x_0}r_{x_1}\cdots
r_{x_n}$, the Hausdorff dimension $s$ is the unique solution of the
equation $r_1^s+r_2^s+\cdots+r_k^s=1$ and $\mu$ is given by
$\mu(Z(x_0x_1\cdots x_n))=r_{x_0}^sr_{x_1}^s\cdots r_{x_n}^s$. The
one-sided shift $T:X\to X$ gives $\ds \lim_{y\to
  x}\frac{d(Tx,Ty)}{d(x,y)}=\varphi(x)=\frac{1}{r_{x_0}}$ and  $(\mu, s)$ is such that $\ds \frac{dT^*\mu}{d\mu}=\varphi^s$.
\end{example}

Suppose $G=G(X,T)$ acts on the left on the space $Y$ via $\omega:Y\to X$. Define\[ \tilde{T}:Y\to Y,\;  \tilde{T}(z)=(T(\omega(z)), -1, \omega(z))\cdot z.\]
 Then $\tilde{T}$ is a local homeomorphism such that $\omega\circ\tilde{T}=T\circ \omega$. Moreover, the action groupoid $G\ltimes Y$ is isomorphic to the groupoid $\tilde{G}=G(Y,\tilde{T})$ via the map
\[\Psi:\tilde{G}\to G\ast Y,\; \Psi((z,m-n,y))=(\omega(z),m-n,\omega(y)), y),\]
for $z,y\in Y$, see \cite{IK}.

\begin{cor}\label{cor:qimeas}
In particular, we can construct quasi-invariant measures on $Y$ as quasi-invariant measures on $G\ltimes Y\cong G(Y,\tilde{T})$.
\end{cor}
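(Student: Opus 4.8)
The plan is to assemble three ingredients already in place: the isomorphism $\Psi\colon G(Y,\tilde T)\to G\ltimes Y$, Theorem \ref{thm:quasi-invariant}, and the transfer-operator characterization of quasi-invariant measures for a Renault-Deaconu groupoid. The corollary is flagged as an immediate consequence, so the work is mostly bookkeeping rather than new estimates.

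First I would verify that $\Psi$ restricts to the identity on the common unit space $Y$. A unit $(z,0,z)$ of $G(Y,\tilde T)$, identified with $z\in Y$, is sent to $((\omega(z),0,\omega(z)),z)=i(z)$, which is precisely the unit of $G\ltimes Y$ corresponding to $z$. Both groupoids are \'etale with Haar systems given by counting measures, so $\Psi$, being a topological groupoid isomorphism that fixes the units pointwise, transports the induced measure $\nu$ on one side to that on the other, and likewise for the inverse $\nu^{-1}$. Consequently a Radon probability measure $\mu$ on $Y$ is quasi-invariant for $G(Y,\tilde T)$ if and only if it is quasi-invariant for $G\ltimes Y$, with the associated Radon-Nikodym cocycles matched through $\Psi$.

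Next I would apply Theorem \ref{thm:quasi-invariant} to the action of $G$ on $Y$: a measure $\mu$ on $Y$ is $G$-quasi-invariant precisely when it is quasi-invariant for the action groupoid $G\ltimes Y$. Chaining this with the previous step yields that $\mu$ is $G$-quasi-invariant if and only if it is quasi-invariant for the Renault-Deaconu groupoid $G(Y,\tilde T)$.

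Finally, because $G(Y,\tilde T)$ is itself a Renault-Deaconu groupoid with integer-valued cocycle $c(z,k,y)=k$, I would invoke the Proposition above characterizing its quasi-invariant measures through the transfer operator: for a continuous $\psi\colon Y\to(0,\infty)$ with associated cocycle $D_\psi$, a probability measure $\mu$ on $Y$ is quasi-invariant with Radon-Nikodym derivative $D_\psi$ exactly when $\Ll_\psi^*\mu=\mu$. This furnishes an explicit recipe for producing $G$-quasi-invariant measures on $Y$. The only delicate point---and the main, though minor, obstacle---is confirming that $\Psi$ fixes the unit space and respects the Radon-Nikodym data, so that quasi-invariance and the modular functions correspond on the two sides; granting that, the statement follows directly from the cited results.
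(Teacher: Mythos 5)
Your proposal is correct and follows exactly the route the paper intends: the corollary is stated without a separate proof because it is the immediate concatenation of the isomorphism $\Psi\colon G(Y,\tilde T)\to G\ltimes Y$ (which, as you check, fixes the unit space $Y$ and matches the counting-measure Haar systems), Theorem \ref{thm:quasi-invariant}, and the transfer-operator characterization of quasi-invariant measures for Renault--Deaconu groupoids. Your write-up simply makes this bookkeeping explicit, and the one "delicate point" you flag (that $\Psi$ respects units and Radon--Nikodym data) is verified correctly.
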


\begin{example}
Let $E$ be a locally finite directed graph which has no sources. Let
$\ds E^* :=\bigcup_{ k\ge 0} E^k$ be the space of finite paths,
where \[E^k = \{e_1e_2\cdots e_k : e_i \in E^1,\; r(e_{i+1}) =
  s(e_i)\},\] and let $E^\infty$ be the infinite path space with the
topology given by  $Z(\alpha)=\{\alpha x:x\in E^\infty\}$ for
$\alpha\in E^*$.  We assume that $E^\infty$ is a totally disconnected
space, homeomorphic to the Cantor set. On $X=E^\infty$, consider  the
shift $T:X\to X,\;\; T(x)_i=x_{i+1}$ which is a local
homeomorphism. The groupoid $G(X,T)$ is called the graph groupoid and
its $C^*$-algebra is denoted by $C^*(E)$. 

Recall that for $\alpha, \beta\in E^*$ with $s(\alpha)=s(\beta)$, we denote
\[Z(\alpha, \beta)=\{\gamma\in G(X,T)\; |\; \gamma=(\alpha x, |\alpha|-|\beta|,\beta y)\},\]
which are compact open bisections.
The indicator functions $\{1_{Z(v,v)}\;|\; v\in E^0\}$ and $\{1_{Z(e,s(e))}\; |\; e\in E^1\}$ generate $C^*(E)$, see \cite{KPRR} (where the range and source maps are reversed). 

If $G(X,T)$ acts on its unit space $X$ by $(x,k,y)\cdot y=x$, let $\mu$ be the Markov  measure on $X$ determined by    a map
 $p:E\to (0,\infty)$ satisfying $\ds \sum_{r(e)=v}p(e)=1$ for every
 $v\in E^0$ and a map $\mu_0:E^0\to (0,\infty)$ satisfying $\ds
 \sum_{v\in E^0}\mu_0(v)=1$ such that 
\[\mu(Z(e_1e_2\cdots e_n))=\mu_0(r(e_1))p(e_1)p(e_2)\cdots p(e_n).\]
Then $\mu$ is quasi-invariant for $G(X,T)$ and $D(e,1,s(e))=\frac{1}{p(e)}$.

The Koopman representation $\kappa^\mu$ of $G(X,T)$ associated to $\mu$ acts on $L^2(X,\mu)$ by rank $1$ operators, since  $L^2(\omega^{-1}(s(g)), \mu_{s(g)})$ reduces to $\mathbb C$.

We now determine the operators $\kappa^\mu(f)\in \Bb(L^2(X,\mu))$ for the above indicator functions. We have for $\xi\in L^2(X,\mu)$
\[\kappa^\mu(1_{Z(v,v)})\xi(x)=\sum_{r(\gamma)=x}1_{Z(v,v)}(\gamma)\kappa^\mu_\gamma(\xi)(x)=\sum_y 1_{Z(v,v)}((x,k,y))\xi((y,k,x)\cdot x)=\]\[=\sum_y 1_{Z(v,v)}((x,k,y))\xi(y)=\begin{cases}\xi(x)\;\text{if}\; r(x)=v\\0\;\text{if}\; r(x)\neq v,\end{cases}\]
\[\kappa^\mu(1_{Z(e,s(e))})\xi(x)=\frac{1}{\sqrt{p(e)}}\sum_{r(\gamma)=x}1_{Z(e,s(e))}(\gamma)\kappa^\mu_\gamma(\xi)(x)=\]\[=\frac{1}{\sqrt{p(e)}}\sum_y 1_{Z(e,s(e))}((x,k,y))\xi(y)=\begin{cases}\frac{1}{\sqrt{p(e)}}\xi(z)\; \text{if}\; x=ez\\0\;\text{if}\; x\neq ez.\end{cases}\]

Denote by $P_v=\kappa^\mu(1_{Z(v,v)})$ and $S_e=\kappa^\mu(1_{Z(e,s(e))})$. Since $L^2(X,\mu)$ decomposes as $\ds\bigoplus_{v\in E^0}L^2(vX,\mu)$, we note that $P_v$ acts as identity on $L^2(vX,\mu)$ and is $0$ otherwise. It follows that $P_v^*=P_v=P_v^2$ and $\ds\sum_{v\in E^0}P_v=I$. Also, $S_e$ takes $L^2(s(e)X,\mu)$ to $L^2(eX,\mu)$ and
\[S_e^*S_e=P_{s(e)}, \;\;\sum_{r(e)=v}S_eS_e^*=P_v.\] Since $\{P_v,
S_e\}$ satisfy the same relations as $\{1_{Z(v,v)}\}$ and
$\{1_{Z(e,s(e))}\}$ for $v\in E^0, e\in E^1$, it follows that
$C^*(\kappa^\mu)$ is a quotient of $C^*(E)$. Since $\mu_0(v)\ne 0$ for
all $v\in E^0$ and $p(e)\ne 0$ for all $e\in E^1$, it follows that the
partial isometries $S_e$ are all non-zero. Using the same proof as
Theorem 3.7 of \cite{KPR} it follows that $\kappa^\mu$ is faithful
and, thus, $C^*(E)$ is isomorphic with $C^*(\kappa^\mu)$.
\end{example}

\begin{rmk}

By allowing $\mu_0$ and $p$ to take zero values at specific vertices and
edges, one can recover the ideal structure of $C^*(E)$ from the
resulting Koopman representation.

 Assume, for simplicity, that $E$
satisfies condition $(K)$: every vertex $v\in E^0$ either has no
loop based at $v$ or at least two loops based at $v$ (\cite[Section
6]{KPR} where the notation for $r$ and $s$ is reversed compared to ours).
Recall also that a subset $H$ of $E^0$ is called hereditary if
whenever $e\in E^1$ and $s(e)\in H$, then $r(e)\in H$.  The set $H$ is called
saturated if whenever $r(s^{-1}(v))\subset H$, then $v\in H$. It is
known that there is an isomorphism between the lattice of saturated
hereditary subsets of $E^0$ and the lattice of ideals of $C^*(E)$
(\cite[Theorem 6.6]{KPRR}) given via $H\mapsto I(H)$, where
\[
  I(H)=\overline{\text{span}}\{1_{Z(\alpha,\beta)}\,:\,\alpha,\beta\text{
    finite paths with }s(\alpha)=s(\beta)\in H\}.
\]
Let $H$ be a saturated hereditary set and let $\mu_0:E^0\to
[0,\infty)$ and $p:E^1\to [0,\infty)$ be defined such that $\sum_{v\in
  E^0}\mu_0(v)=1$, $\sum_{r(e)=v}p(e)=1$ for all $v\in E^0$,  $\mu_0(v)=0$ for all
$v\in H$ and $p(e)=0$ for all $e\in s^{-1}(H)$. Then $(\mu_0,p)$
defines a quasi-invariant measure $\mu$ on $X$ as above and one can
easily check, using  computations like in the previous example, that $\operatorname{ker}\kappa^{\mu}=I_H$.
\end{rmk}

\begin{example}
  In this example we follow the notation of \cite{IK}: we let
  $W=\{1,\dots,N\}$ for some integer $N\ge 2$, $W^n$ is the set of
  words of length $n$ over the alphabet $W$, and $W^*=\bigcup_{n\ge
    0}W^n$ is the set of finite words over $W$. We let $X=W^\infty$ be
  the set of infinite words (sequences) with elements in $W$ and
  $T:X\to X$ be the shift map:
  $T(x_1x_2x_3\cdots)=(x_2x_3\cdots)$. As in the previous example, the
  topology on $X$ is given by the clopen cylinders $Z(w)=\{w
  x\,:\,x\in X\}$ for all $w\in W^*$. Then $G=G(X,T)$ is the Cuntz
  groupoid (\cite[Section III.2]{R80}) and $C^*(G)$ is isomorphic with
  the Cuntz algebra $ \Oo_N$. Let $(Y,d)$ be a complete metric space and
  let $(F_1,\dots,F_N)$ be an iterated function system on $Y$
  (\cite{Hu}). That is, each $F_i$ is a strict contraction on $Y$. We
  assume further that each $F_i$ is a homeomorphism. There is a unique
  compact invariant set $K$ (\cite[Theorem 3.1.3]{Hu}) such that
  $K=\bigcup_{i=1}^NF_i(K)$. Assume that the iterated function system
  is totally disconnected: $F_i(K)\bigcap F_j(K)=\emptyset$ if $i\ne
  j$. In this case $K$ is a totally disconnected set.

  We recall next
  the construction of a ``fractafold'' bundle $\fS$ on
  which $G$ acts and an invariant measure on $\fS$  as given in
  \cite[Section 3]{IK}. For $w\in W^n$ we  write
  $F_w^{-1}(A)=F_{w_1}\circ \cdots \circ   F_{w_n}^{-1}(A)$ and
  $F_w(A)=F_{w_n}\circ\cdots\circ F_{w_1}(A)$. For $x\in X$ or $x\in
  W^*$ we write $x(n):=x_1\cdots x_n$ and set  
  $\fS_n(x)=F_{x(n)}^{-1}(K)$. Then
  $\fS_n(x)\subset \fS_{n+1}(x)$ and the infinite blow-up of $K$ at $x$ is
  $\fS(x)=\bigcup_{n\ge 0}\fS_n(x)$ endowed with the inductive limit
  topology (see \cite[Section 5.4]{Str} for a short introduction to
  blow-ups). The \emph{fractafold bundle} $\fS$ is defined as the
  increasing union of $\fS_n:=\bigsqcup_{w\in W^*}Z(w)\times
  \fS_n(w)$ endowed with the inductive limit topology. Then $\fS$ is
  a Hausdorff space and the map $\omega:\fS\to
  X$, $\omega(x,t)=x$ is continuous, open and surjective. Under the
  assumption that the iterated function system is totally
  disconnected, $\fS$ is locally compact. The
  groupoid $G(X,T)$ acts on $\fS$ via
  \[
    (x,m-n,y)(y,t)=(x,F_{x(m)}^{-1}(F_{y(n)}(t))).
  \]
  There is a unique invariant probability measure $\mu$ on $K$
  (\cite[Theorem 4.4.1]{Hu}) such that $\displaystyle
  \mu(A)=\frac1{N}\sum_{i=1}^N\mu(F_i^{-1}(A))$ for all Borel subsets
  $A$ of $K$. One can extend $\mu$ to an infinite measure $\mu_x$ on $\fS_x$ via
  $\mu_x(A)=N^n\mu(F_{x(n)}(A))$ if $A\in
  \fS_n(x)$. Consider the measure $\nu$ on $X$ generated by weights
  $\{1/N,\dots,1/N\}$. That is $\nu(Z(w))=(1/N)^n$ for all $w\in W^n$
  and $n\ge 0$. Then there is a unique $G$-\emph{invariant} measure
  $\mu_\infty$ on $\fS$ such that $\mu_\infty(U\times
  A)=\nu(T^n(U))\cdot \mu(F_{w(n)}(A))$ for all $n\ge 0$, $w\in
  W^n$, and $U\times A\subset Z(w)\times \fS_n(w)$ (\cite[Proposition
  3.11]{IK}). Note that the measure $\tm$ in the decomposition of
  $\mu_\infty$ equals $\nu$ and is quasi-invariant for $G$.

  The Koopman representation $\kappa$ of $G$ on $\mu_\infty$ extends
  to a representation of $\Oo_N$ that acts on $L^2(L,\mu_\infty)$ via
  \[
    \kappa(f)\xi(x,t)=\sum_{(x,m-n,y)\in
      G}f(x,m-n,y)\xi(y,F_{y(n)}^{-1}(F_{x(m)}(t)))
  \]
  for all $(x,t)\in L$, $\xi \in L^2(\fS,\mu_\infty)$, and $f\in
  C_c(G)$. In particular, if $S_i=1_{Z(i,\emptyset)}$ are the Cuntz
  isometries generating $C^*(G)\cong \Oo_N$, where
 \[Z(i,\emptyset):=\{(ix,1,x)\,:\,x\in X\},\] then
  \[
    \kappa(S_i)\xi(x,t)=\begin{cases}
                          \xi(T(x),F_i(t))& \text{ if }x\in Z(i)\\
                          0&\text{ otherwise}
                        \end{cases}
 \]
 for all $i=1,\dots, N$. We note that $\kappa(S_i)\ne 0$ for all
 $i=1,\dots N$. To see this, let $\xi \in L^2(\fS,\mu_\infty)$ be
 defined via
 \[
   \xi(x,t)=1_{\fS_0}(x,t)=
   \begin{cases}
     1 & \text{ if }x\in X \text{ and }t\in K\\
     0 &\text{ otherwise.}
   \end{cases}
 \]
 Note that $\xi\in L^2(L,\mu_\infty)$ since $\mu_\infty(\fS_0)=1$. Then
 $\kappa(S_i)(\xi)=\xi_i$ where
 \[
   \xi_i(x,t)=1_{Z(i)\times F_i^{-1}(K)}=
   \begin{cases}
     1 &\text{ if }x\in Z(i)\text{ and }t\in F_i^{-1}(K)\\
     0 &\text{ otherwise},
   \end{cases}
 \] for all $i=1,\dots, N$. Since
 \[\int_{\fS}\xi_i(x,t)\,d\mu_\infty(x,y)=\mu_\infty(Z(i)\times
   F_i^{-1}(K))=\mu(K)=1
   \] by the definition of $\mu_\infty$, we get $\kappa(S_i)\ne 0$. Therefore
   $C^*(\kappa)\cong \Oo_N$.

   One can build other $G$-invariant measures on $\fS$  by considering invariant measures
   for $K$ using non-equal strictly positive weights
   $\{p_1,\dots,p_N\}$ such that $\sum_{i=1}^Np_i=1$. There is a
   unique measure on $K$ that satisfies
   $\mu(A)=\sum_{i=1}^Np_i\mu(F_i^{-1}(A))$ for all Borel subsets $A$
   of $K$. Also, one can define a measure $\nu$ on $X$ based on the
   weights via $\nu(Z(w))=p_{w_1}\cdot \cdots p_{w_n}$ for all $w\in
   W^*$. Then one can prove that the measure $\mu_\infty$ defined as
   above, $\mu_\infty(U\times
  A)=\nu(T^n(U))\cdot \mu(F_{w(n)}(A))$ for all $n\ge 0$, $w\in
  W^n$, and $U\times A\subset Z(w)\times L_n(w)$, is a $G$-invariant
  measure. Under our assumption that $p_i>0$ for all $i=1,\dots,N$, a
  similar analysis proves that $C^*(\kappa)\cong \Oo_N$.
   
\end{example}

\begin{example}
  Consider again the Cuntz groupoid as defined in the previous
  example: $X=\{1,\dots,N\}^\NN$ and $T:X\to X$ is the shift. We show
  that if $Y$ is any left $G$-space and if $\mu$ is any $G$-invariant
  measure on $Y$ with \emph{full support}, then $C^*(\kappa)\cong
  \Oo_N$. This example generalizes easily to the case of finite graphs
  that satisfies the (K)-condition or, equivalently, Cuntz-Krieger
  algebras that satisfy condition (II).

  Let $Y$ be a locally compact Hausdorff left $G$-space with anchor
  map $\omega:Y\to X=\go$ and assume that $\mu$ is a $G$-invariant
  measure on $Y$. Recall that $T$ lifts to a local homeomorphism
  $\tilde{T}:Y\to Y$ defined via
  $\tilde{T}(z)=(T(\omega(z)),-1,\omega(z))\cdot z$ for all $z\in Y$
  and $G\ltimes Y\cong G(Y,\tilde{T})$ (see the discussion before
  Corollary \ref{cor:qimeas}). Therefore there is $\psi:Y\to \RR_+^*$
  such that $\mu$ is invariant for the dual of the transfer operator
  $\Ll_\psi$ defined as in \eqref{eq:trop}. Let
  $S_i=1_{Z(i,\emptyset)}$, $i=1,\dots,N$, be the Cuntz isometries that
  generate $C^*(G)$. Then
  \begin{multline*}
    \kappa(S_i)\xi(z)=\sum_{(\omega(z),m-n,x)\in
      G}S_i(\omega(z),m-n,x)\xi((x,n-m,\omega(z))\cdot z)\\
    \cdots \Delta_{\mu}((x,n-m,\omega(z)),z)^{1/2}\\
    =
    \begin{cases}
      \xi((T(\omega(z)),-1,\omega(z))\cdot
      z)\Delta_\mu((T(\omega(z)),-1,\omega(z)), z)^{1/2}& \text{ if
                                                          }z\in
                                                          \omega^{-1}(Z(i))\\
      0 & \text{otherwise}
    \end{cases}
  \end{multline*}
  which, by the identification of $G\ltimes Y$ with $G(Y,\tilde{T})$
  \begin{align*}
    =&
    \begin{cases}
      \xi(\tilde{T}(z))\Delta_\mu(\tilde{T}(z),-1,z)^{1/2}& \text{ if
                                                            }z\in
                                                            \omega^{-1}(Z(i))\\
      0 & \text{ otherwise}
       \end{cases}\\
      =&      \begin{cases}\xi(\tilde{T}(z))\psi(z)^{-1/2}& \text{ if
                                                            }z\in
                                                            \omega^{-1}(Z(i))\\
      0 & \text{ otherwise.}
    \end{cases}
  \end{align*}
  Since $\psi$ is strictly positive and $\mu$ has full support, it follows that $\kappa(S_i)\ne
  0$ for all $i=1,\dots,N$ and, thus, $C^*(\kappa)\cong \Oo_N$.
\end{example}

\bigskip

\end{document}